\newtheorem{Remark}{Remark}[section]
\newtheorem{Corollary}[Remark]{Corollary}
\newtheorem{Definition}[Remark]{Definition}
\newtheorem{Lemma}[Remark]{Lemma}
\newtheorem{Theorem}[Remark]{Theorem}
\newcommand{\bH}{\mathbb{H}}
\newcommand{\bI}{\mathbb{I}}
\newcommand{\bN}{\mathbb{N}}
\newcommand{\bR}{\mathbb{R}}
\newcommand{\bU}{\mathbb{U}}
\newcommand{\bW}{\mathbb{W}}
\newcommand{\bV}{\mathbb{V}}
\newcommand{\bZ}{\mathbb{Z}}
\newcommand{\cC}{\mathcal{C}}
\newcommand{\cH}{\mathcal{H}}
\newcommand{\cL}{\mathcal{L}}
\newcommand{\cM}{\mathcal{M}}
\newcommand{\cN}{\mathcal{N}}
\newcommand{\cR}{\mathcal{R}}
\newcommand{\cT}{\mathcal{T}}
\newcommand{\cV}{\mathcal{V}}
\newcommand{\cW}{\mathcal{W}}
\numberwithin{equation}{section} \errorcontextlines=0
\newcommand{\degg}{\nabla_{G}\textrm{-}\mathrm{deg}}
\newcommand{\degh}{\nabla_{H}\textrm{-}\mathrm{deg}}
\newcommand{\im}{\mathrm{ im \;}}
\newcommand{\ds}{\displaystyle}
\newcommand{\h}{\mathbb{H}}
\begin{document}

\title[Bifurcations from degenerate orbits]{Bifurcations from degenerate orbits of solutions of nonlinear elliptic systems}
\subjclass[2010]{Primary:  35J50; Secondary: 58E09.}
\keywords{Symmetric elliptic systems, global bifurcations, equivariant degree}

\author{Anna Go\l\c{e}biewska}
\address{Faculty of Mathematics and Computer Science
Nicolaus Copernicus University \\
ul. Chopina $12 \slash 18$, PL-87-100 Toru\'{n},
Poland, ORCID 0000--0002--2417--9960\\
Anna.Golebiewska@mat.umk.pl}


\author{Joanna Kluczenko}
\address{Faculty of Mathematics and Computer Science, University of Warmia and Mazury \\ul.~Sloneczna 54, PL-10-710 Olsztyn, Poland,
ORCID 0000--0003--0121--1136\\
jgawrycka@matman.uwm.edu.pl}

\author{Piotr Stefaniak}
\address{Faculty of Mathematics and Computer Science
Nicolaus Copernicus University \\
ul. Chopina $12 \slash 18$, PL-87-100 Toru\'{n},
Poland, ORCID 0000--0002--6117--2573\\
cstefan@mat.umk.pl}

\numberwithin{equation}{section}
\allowdisplaybreaks

\maketitle

\begin{abstract}
The aim of this paper is to study global bifurcations of non-constant solutions of some nonlinear elliptic systems, namely the system on a sphere and the Neumann problem on a ball. We study the bifurcation phenomenon from families of constant solutions given by critical points of the potentials. Considering this problem in the presence of additional symmetries of a compact Lie group, we study orbits of solutions and, in particular, we do not require  the critical points to be isolated. Moreover, we allow the considered orbits of critical points to be degenerate. To prove the bifurcation we compute the index of an isolated degenerate critical orbit in an abstract situation. This index is given in terms of the degree for equivariant gradient maps.
\end{abstract}

\section{Introduction}

The aim of this article is to study global bifurcations of solutions of two kinds of nonlinear systems of elliptic equations: the system
on a sphere
\begin{equation}\label{eq:sphere11}
- \Delta u =  \nabla_u F(u, \lambda) \quad \text{ on } \quad S^{N-1}
\end{equation}
and the Neumann problem on a ball
\begin{equation}\label{eq:ball11}
\left\{
 \begin{array}{rclcl}   -\triangle u & =& \nabla_u F(u,\lambda )   & \text{ in   } & B^N \\
                   \frac{\partial u}{\partial \nu} & =  & 0 & \text{ on } & S^{N-1}
\end{array}\right.
\end{equation}
with a potential $F$ satisfying some additional assumptions given in Section \ref{sec:elliptic} (the conditions (B1)-(B6)).

The global bifurcation problem concerns finding connected sets of solutions of some equations, emanating from a known family of so called trivial solutions. One of the most famous results on this topic is the Rabinowitz alternative, see \cite{Rabinowitz}, \cite{Rabinowitz1}. This result gives conditions to an occurrence of the phenomenon of global bifurcation and describes the basic structure of the emanating sets of solutions. It has been proved with the use of the Leray-Schauder degree. It is worth to point out that the proof relies on the properties of the degree, not its exact definition. Therefore, similar results can be obtained with the use of other degree theories satisfying analogous properties, in particular the property of generalised homotopy invariance (or otopy invariance), see for example \cite{BarKamNow}, \cite{GolRyb2011}, \cite{GuoLiu}, \cite{Mawhin}, \cite{Ryb2005milano}.

Rabinowitz's result and its later generalisations deal with an equation of the type $T(u,\lambda)=0$ having a family of trivial solutions of the form $\{u_0\}\times \bR$.
Such a situation appears naturally if the variational method of studying differential equations is applied,  i.e. solutions of the differential equation are associated with critical points of some functional, defined on a suitable Hilbert space.
Then the family of trivial solutions often consists of known constant functions and the bifurcation of non-constant solutions from this family is studied.

In such an approach it is required that there are $\widehat{\lambda}\in\bR$ such that the point $u_0$ is isolated in the set of solutions of the equation $T(u,\widehat{\lambda})=0$.
However, one can also consider a symmetric case, i.e. assume that $T$ is $G$-equivariant, where $G$ is a compact Lie group. Then if $u_0$ is a solution, all points from its orbit also solve the equation. Therefore, if the group has positive dimension, it is often impossible to find such $\widehat{\lambda}$.

Symmetries in differential problems come naturally from applications, in particular from mathematical physics. That is one of the reasons why  elliptic systems with different kinds of symmetries are presently studied by many mathematicians, see for example \cite{BalHooKraRa}, \cite{BonSeTi}, \cite{KaKo}, \cite{KaSo}, \cite{MaChenWa}, \cite{MaYan}. In these articles there are applied various methods: variational and non-variational.

A symmetric situation is also considered in this paper. Namely, the systems \eqref{eq:sphere11} and \eqref{eq:ball11} are defined on $SO(N)$-symmetric domains and we assume additionally that their potentials are $\Gamma$-invariant for $\Gamma$ being a compact Lie group. We study these problems using variational methods, investigating associated functionals. These functionals are $G=\Gamma\times SO(N)$-invariant, i.e. they inherit both kinds of symmetries of the problems. The family of trivial solutions is therefore of the form $G(u_0)\times\bR$ and we study global bifurcations from orbits of critical points (called critical orbits).

Methods of studying bifurcations from critical orbits have been recently developed. In \cite{PRS} there have been given formulae for an equivariant Conley index, allowing to study local bifurcation problems in the presence of orbits of solutions. These formulae, combined with the relation of the index with the equivariant degree theory, have been applied in \cite{GolKluSte} to obtain global bifurcations of solutions in Neumann problems on a ball.
In \cite{GolSte} we have proposed a different approach to study global bifurcations with a use of an equivariant degree at a neighbourhood of an orbit. As an application of these results we have studied elliptic systems on symmetric domains. In \cite{GolSte2} we have used these methods to study the existence of unbounded connected sets of solutions of systems on a sphere.
 In the papers mentioned above we have assumed that the isotropy groups of the critical points of the potentials are trivial. However, we have also considered the case without this assumption, see \cite{GolRybSte}.

As far as we know these are the only articles about elliptic systems where the isolation of the critical points of the potentials has not been required. In all of these papers we have assumed that the family of trivial solutions is given by critical orbits of the potentials of the systems, requiring additionally that these critical orbits are non-degenerate.

In this article we consider a more general situation, allowing degeneracy of critical orbits. To investigate the global bifurcation problem in this situation we apply an equivariant generalisation of the Rabinowitz alternative (see Theorem \ref{thm:RabAlt}) obtained by an application of the equivariant degree.
We also apply Dancer's result proposed in \cite{Dancer1984} and its generalisation from \cite{FRR} called the splitting lemma (see Lemma \ref{thm:splitting}). This lemma allows to separate the non-degenerate and degenerate parts of the problems and, under some standard assumptions, it allows to reduce comparing the degrees to comparing its non-degenerate parts.
In that way we obtain global bifurcations of solutions of the system on a sphere (Theorem \ref{thm:sphere}) and on a ball (Theorem \ref{thm:ball}). These theorems are the main results of our paper.

These two results are of different types. This is a consequence of the differences between the spectral behaviours of the Laplacians on their domains. In the case of the system on a sphere the only radial solutions are the constant ones. From this we can conclude a necessary condition for a bifurcation, see Lemma \ref{lemma:warunekkonieczny}. This allows to indicate the exact levels of global bifurcations of solutions of the system \eqref{eq:sphere11} in Theorem \ref{thm:sphere}. In the case of a ball there are nonconstant radial solutions and therefore the reasoning from the proof of Lemma \ref{lemma:warunekkonieczny} cannot be applied to obtain its counterpart for the system \eqref{eq:ball11}. Hence the bifurcation result given in Theorem \ref{thm:ball} does not provide such a precise indication and it is formulated in a different way, namely, there is given only an approximate location of the bifurcation.

We emphasise that, as far as we know, the bifurcation problem in the degenerate situation in elliptic systems has not yet been investigated, even in the case of critical sets consisting of isolated points.

Our main tool to obtain the bifurcation results is the equivariant degree.
We define an index of an isolated critical orbit to be the degree on some neighbourhood of this orbit.
To obtain the results for the differential systems we have developed the methods of computing this index, generalising the methods from \cite{GolSte}.
In Theorem \ref{thm:degrees} we have obtained a relation of the index of an orbit with the index of an isolated point from the space normal to this orbit.
Using this result, in the case of admissible pairs of groups, we have simplified the comparison of the indices of orbits by reducing it to the comparison of the indices of the  critical points, see Corollary \ref{cor:differentfinitedegrees}.

The abstract results described above are proved in Section \ref{sec:degree}, while Section \ref{sec:elliptic} is devoted to study elliptic systems \eqref{eq:sphere11} and \eqref{eq:ball11}. At the end of the paper, for the convenience of the reader, we recall some relevant material in the appendix.


\section{Degree of a critical orbit}\label{sec:degree}

In this section we introduce the topological method that we use to prove the bifurcation, i.e. a method of computing the index of an isolated critical orbit. We allow the case when such an orbit is degenerate. The index is defined in terms of  a degree for equivariant gradient maps, computed in some neighbourhood of the orbit. Note that we recall the notion of the degree in the appendix. Since the bifurcation phenomenon occurs when the index changes, our aim is to formulate results which allow to compare the degrees.

Let $G$ be a compact Lie group, $\bV$ - a finite dimensional orthogonal $G$-representation and $\phi \in C^2(\bV,\bR)$ - a $G$-invariant function. Fix $v_0 \in (\nabla \phi)^{-1}(0)$ and consider the orbit $G(v_0)$ of $v_0$.  It is known, that $G$-invariance of $\phi$ implies $G$-equivariance of $\nabla \phi$, hence $G(v_0) \subset (\nabla \phi)^{-1}(0)$. Assume that this orbit is isolated in such a set. Therefore, we can choose a $G$-invariant open set $\Omega \subset \bV$ such that $(\nabla \phi)^{-1}(0)\cap cl(\Omega) = G(v_0)$, which implies that $\phi$ is $\Omega$-admissible. Moreover, without loss of generality, we can assume that $\Omega = G \cdot B_{\epsilon}(v_0,\bW)$, where  $\bW=T_{v_0}^{\perp}G(v_0)$ is the space normal to the orbit $G(v_0)$ at $v_0$, $B_{\epsilon}(v_0,\bW)$ is the open ball in $\bW$ of radius $\epsilon$, centred at $v_0$, and $\epsilon$ is given by the slice theorem (see Theorem \ref{thm:slice}).

Under the above assumptions, the degree $\degg(\nabla \phi, \Omega)$ is well-defined and our aim is to obtain conditions simplifying the comparison of the degrees of such form. The result of this kind has been given in \cite{GolSte}. However, it can be applied only in the case when critical orbits of $\phi$ are non-degenerate. In this section we allow the situation when $G(v_0)$ is degenerate.

In this case to compute the degree we use its definition, in particular the fact that it can be given as the degree of an associated $\Omega$-Morse function (see the Appendix for the definition). More precisely, Lemma \ref{lem:istnienie-Morsa} implies the existence of a $G$-invariant open set $\Omega_0 \subset \Omega$ and a $G$-invariant $\Omega$-Morse function $\widehat{\phi}$ which coincide with $\phi$ outside the set $\Omega_0$. Therefore $\widehat{\phi}$ and $\phi$ are $\Omega$-homotopic, so the homotopy invariance of the degree implies that
\begin{equation}\label{eq:deg1}
\degg(\nabla \phi, \Omega)=\degg(\nabla \widehat{\phi}, \Omega).
\end{equation}

Since $\widehat{\phi}$ is an $\Omega$-Morse function, it has a finite number of critical orbits in $\Omega$. In the next lemma we show that the isotropy groups of elements of critical orbits of $\widehat{\phi}$ are related to the isotropy group of $v_0$. Recall that by $G_{v_0}$ we denote the isotropy group of $v_0$ and put $H=G_{v_0}$. It is known that $\bW$ is an orthogonal representation of $H$. With the above notation there holds the following result:

\begin{Lemma}\label{lem:orbittypes}
If $v\in(\nabla \widehat{\phi})^{-1}(0) \cap \Omega$, then $G_v$ is conjugate  to a subgroup of $H$.
\end{Lemma}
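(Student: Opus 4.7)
The plan is to derive the statement directly from the slice theorem: the critical-point hypothesis $\nabla \widehat{\phi}(v) = 0$ actually plays no role, and the conclusion is a general fact about orbit types in the equivariant tubular neighbourhood $\Omega = G\cdot B_\epsilon(v_0,\bW)$ chosen at the outset.

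First I would apply the slice theorem (Theorem~\ref{thm:slice}) to identify $\Omega$ with the twisted product $G\times_H B_\epsilon(v_0,\bW)$ via the $G$-equivariant diffeomorphism $\Phi([g,u]) = g\cdot u$. Then any $v \in \Omega$ can be written as $v = g\cdot u$ with $g\in G$ and $u\in B_\epsilon(v_0,\bW)$, and since $G_v = g G_u g^{-1}$, it suffices to prove that $G_u \subset H$.

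To handle a slice point $u$, I would suppose $g_0 \in G_u$, i.e.\ $g_0\cdot u = u$. Then $[g_0,u]$ and $[e,u]$ have the same image under $\Phi$, so by injectivity $[g_0,u]=[e,u]$ in $G\times_H B_\epsilon(v_0,\bW)$. Unpacking the definition of the twisted product, there must exist $h\in H$ with $g_0 = h$ (and automatically $h\cdot u = u$), so $g_0\in H$. Hence $G_u \subset H$, and consequently $G_v = g G_u g^{-1}$ is conjugate to the subgroup $G_u$ of $H$, as claimed.

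I do not anticipate any substantive obstacle: the whole argument is a direct unpacking of the slice-theorem decomposition of $\Omega$, together with the observation that the only way a group element can fix a slice point is if it already lies in $H$. The analytic properties of $\widehat{\phi}$ (in particular that $v$ is a critical point, or even that $\widehat{\phi}$ is a Morse function) are never used.
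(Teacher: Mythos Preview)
Your proof is correct and follows essentially the same approach as the paper: write $v=gw$ with $w$ in the slice via the slice theorem, then show the isotropy group of a slice point lies in $H$. The only cosmetic difference is that the paper invokes the standard formula $G_{[g,w]}=gH_wg^{-1}$ (citing Kawakubo) whereas you derive the containment $G_u\subset H$ directly from injectivity of the slice diffeomorphism; your observation that the critical-point hypothesis is never used is also accurate and is implicit in the paper's argument as well.
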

\begin{proof}

Fix $v\in(\nabla \widehat{\phi})^{-1}(0) \cap \Omega$. Then, since $\Omega = G \cdot B_{\epsilon}(v_0,\bW)$,
there exist $g\in G$ and $w\in B_{\epsilon}(v_0,\bW)$ such that $v=gw$. From Theorem \ref{thm:slice} it follows that $gw=\theta([g,w])$, where $\theta$ is a $G$-diffeomorphism.  Hence $G_{gw}=G_{[g,w]}$. To finish the proof we use the fact that $G_{[g,w]}=gH_wg^{-1}$ (see Lemma 4.16 of \cite{Kawakubo}). Obviously, $H_w \subset H$.
\end{proof}

With functions $\phi$ and $\widehat{\phi}$ we consider their restrictions to the space $\bW$. Consider an $H$-invariant function $\psi \colon \bW \to \bR$ given by $\psi=\phi|_{\bW}$. From the definitions of $\psi$ and $\Omega$ it follows that $(\nabla \psi)^{-1}(0)\cap cl(B_{\epsilon}(v_0,\bW)) = H(v_0) =\{v_0\}$ and therefore $\psi$ is a $B_{\epsilon}(v_0,\bW)$-admissible function.

In a similar way consider an $H$-invariant function $\widehat{\psi}\colon \bW \to \bR$ given by $\widehat{\psi}=\widehat{\phi}|_{\bW}$. In the following we will show that $\widehat{\psi}$ is an $H$-invariant $B_{\epsilon}(v_0,\bW)$-Morse function associated with $\psi$.
First of all note that from the definitions of $\widehat{\psi}$ and $\psi$ it follows that $\widehat{\psi}(w)=\psi(w)$ for every $w \notin \Omega_0 \cap \bW$.

Denote by $G(v_1), \ldots, G(v_q)$ the set of all the critical orbits of $\widehat{\phi}$ in $\Omega$.  In the next lemma we show that all critical orbits of $\widehat{\psi}$ in $B_{\epsilon}(v_0,\bW)$ can be obtained from this set.

\begin{Lemma}
There exist  $w_1, \ldots, w_q \in B_{\epsilon}(v_0,\bW)$ such that $G(v_i)\cap \bW = H(w_i)$, for $i=1,\ldots, q$, and $(\nabla \widehat{\psi})^{-1}(0)\cap B_{\epsilon}(v_0,\bW)=H(w_1) \cup \ldots \cup H(w_q)$.
\end{Lemma}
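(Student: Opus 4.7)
My plan is to use the slice-theorem diffeomorphism $\theta\colon G\times_H B_\epsilon(v_0,\bW)\to \Omega$ to match each critical $G$-orbit of $\widehat{\phi}$ in $\Omega$ with a single critical $H$-orbit of $\widehat{\psi}$ in the slice, and then to check that these $H$-orbits exhaust the zero set of $\nabla\widehat{\psi}$ in $B_\epsilon(v_0,\bW)$.

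For the first assertion, since $G(v_i)\subset \Omega = G\cdot B_\epsilon(v_0,\bW)$, I would write $v_i = g_i w_i$ with $w_i\in B_\epsilon(v_0,\bW)$ and hence $G(v_i)=G(w_i)$. To show $G(v_i)\cap \bW = H(w_i)$, I would exploit injectivity of $\theta$: if $gw_i\in B_\epsilon(v_0,\bW)$ for some $g\in G$, then $\theta([g,w_i])=\theta([e,gw_i])$ forces $g=h\in H$ and $gw_i=hw_i\in H(w_i)$. The reverse inclusion is automatic since $H$ fixes $v_0$ and acts orthogonally on $\bV$, hence preserves $\bW$ and the ball $B_\epsilon(v_0,\bW)$, so $H(w_i)\subset G(v_i)\cap B_\epsilon(v_0,\bW)$. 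A harmless further shrinking of $\epsilon$, if needed, ensures that $G(v_i)\cap \bW$ does not leave $B_\epsilon(v_0,\bW)$, which yields the stated equality.

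For the second assertion, the inclusion $H(w_1)\cup\cdots\cup H(w_q)\subset (\nabla\widehat{\psi})^{-1}(0)$ is immediate: if $w\in H(w_i)\subset G(v_i)$, then $\nabla\widehat{\phi}(w)=0$, and because $\widehat{\psi}=\widehat{\phi}|_{\bW}$ the gradient $\nabla\widehat{\psi}(w)$ is the orthogonal projection of $\nabla\widehat{\phi}(w)$ onto $\bW$, hence zero. For the reverse inclusion, I would take $w\in B_\epsilon(v_0,\bW)$ with $\nabla\widehat{\psi}(w)=0$ and argue that $\nabla\widehat{\phi}(w)=0$. The $G$-invariance of $\widehat{\phi}$ gives $\nabla\widehat{\phi}(w)\perp T_w G(w)$, and $\nabla\widehat{\psi}(w)=0$ gives $\nabla\widehat{\phi}(w)\perp\bW$. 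Since $\bW + T_{v_0}G(v_0)=\bV$ by the very definition of $\bW$, and the slice theorem guarantees the transversality $\bW + T_w G(w)=\bV$ throughout $B_\epsilon(v_0,\bW)$ (this is precisely what makes $\theta$ a diffeomorphism at each point of the slice), the combined orthogonality forces $\nabla\widehat{\phi}(w)=0$. Thus $w$ is a critical point of $\widehat{\phi}$ lying in some $G(v_j)$, and the first part puts $w\in H(w_j)$.

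\textbf{Main obstacle.} The only nontrivial step is the implication $\nabla\widehat{\psi}(w)=0\Rightarrow\nabla\widehat{\phi}(w)=0$, which is not purely formal: it requires combining the $G$-invariance of $\widehat{\phi}$ (to kill the tangential component along $T_w G(w)$) with the slice-theoretic transversality $\bV=\bW + T_w G(w)$ available throughout $B_\epsilon(v_0,\bW)$. The remaining arguments are bookkeeping consequences of the slice decomposition $\Omega\cong G\times_H B_\epsilon(v_0,\bW)$.
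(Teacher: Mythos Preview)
Your proof follows the paper's route: use the slice diffeomorphism (the paper invokes Theorem~\ref{thm:slice2}, you use injectivity of $\theta$; these are equivalent) to show that each $G(v_i)$ meets the slice in a single $H$-orbit, and then check both inclusions for the critical set of $\widehat\psi$. The one substantive difference is the reverse implication $\nabla\widehat\psi(w)=0\Rightarrow\nabla\widehat\phi(w)=0$: the paper declares it ``obvious'', while you supply the transversality $\bW+T_wG(w)=\bV$. Your care here is warranted, since for $w\in\bW$ one only has $\nabla\widehat\psi(w)=P_{\bW}\nabla\widehat\phi(w)$ in general, not equality; the transversality (which indeed follows from $d\theta_{[e,w]}$ being surjective) is exactly what forces $\nabla\widehat\phi(w)=0$.

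One small caveat: the ``harmless shrinking of $\epsilon$'' is not really available, since shrinking $\epsilon$ would change $\Omega$, $\widehat\phi$, and the $v_i$; and in fact the literal equality $G(v_i)\cap\bW=H(w_i)$ can fail (take $G=SO(2)$ acting on $\bR^2$ with $v_0\neq 0$: a circular orbit meets the radial line $\bW$ in two antipodal points, only one of which lies in $B_\epsilon(v_0,\bW)$). What you and the paper actually establish and use is $G(v_i)\cap B_\epsilon(v_0,\bW)=H(w_i)$, and that is all that is needed for the second assertion and everything downstream.
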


\begin{proof}

Fix $i \in \{1, \ldots, q\}$ and observe that from the definition of $\Omega$ there exist $g \in G, w_i \in B_{\epsilon}(v_0,\bW)$ such that $v_i=gw_i$.
 Therefore $w_i \in G(v_i) \cap \bW$ and hence $H(w_i) \subset G(v_i) \cap \bW$.
 To prove the opposite inclusion assume that $\tilde gv_i\in \bW$ for some $\tilde g \in G$. This implies that $\tilde gv_i\in \Omega \cap \bW=B_{\epsilon}(v_0,\bW).$ On the other hand, taking again $g \in G$ and $w_i \in \bW$ such that $v_i=gw_i$ we obtain that $\tilde{g}gw_i\in B_{\epsilon}(v_0,\bW).$ By Theorem \ref{thm:slice2}, this implies that $\tilde{g} g\in H$. Hence $G(v_i) \cap \bW= H(w_i)$.

To prove that $H(w_1) \cup \ldots \cup H(w_q)\subset (\nabla \widehat{\psi})^{-1}(0)\cap B_{\epsilon}(v_0,\bW)$ note that, by the orthogonality of $\bW$ as an $H$-representation, if $w_i\in B_{\epsilon}(v_0,\bW)$, then $H(w_i)\subset B_{\epsilon}(v_0,\bW)$ and, since $hw_i\in G(v_i)$ for every $h\in H$,
$\nabla \widehat{\psi}(hw_i)=\nabla \widehat{\phi}(hw_i)=0.$
Therefore $H(w_i) \subset (\nabla \widehat{\psi})^{-1}(0)\cap B_{\epsilon}(v_0,\bW)$ for every $i=1,\ldots, q$.

To finish the proof note that if $w \in (\nabla\widehat{\psi})^{-1}(0) \cap B_{\epsilon}(v_0,\bW)$, then obviously $w \in (\nabla\widehat{\phi})^{-1}(0)$. In particular, for some $i=1,\ldots, q$, $w\in G(w_i)\cap B_{\epsilon}(v_0,\bW)$, which is equal to  $H(w_i)$. Therefore $(\nabla \widehat{\psi})^{-1}(0)\cap B_{\epsilon}(v_0,\bW) \subset H(w_1) \cup \ldots \cup H(w_q)$.
\end{proof}

 In the following lemma we prove that all the critical orbits of $\widehat{\psi}$ in $B_{\epsilon}(v_0,\bW)$ are non-degenerate.

\begin{Lemma}
Let $H(w_1), \ldots, H(w_q)$ be the orbits obtained in the previous lemma. Then for every $i=1,\ldots,q$, the orbit $H(w_i)$ is a non-degenerate critical orbit of $\widehat{\psi}$.
\end{Lemma}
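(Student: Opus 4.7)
The plan is to transfer non-degeneracy of the critical orbit $G(v_i)$ of $\widehat{\phi}$ to non-degeneracy of $H(w_i)$ of $\widehat{\psi}$ via the tubular-neighbourhood structure coming from the slice theorem. Fix $i\in\{1,\dots,q\}$. By Theorem \ref{thm:slice}, the map $\theta\colon G\times_H B_{\epsilon}(v_0,\bW)\to\Omega$, $\theta([g,w])=gw$, is a $G$-equivariant diffeomorphism, and $G$-invariance of $\widehat{\phi}$ gives $(\widehat{\phi}\circ\theta)([g,w])=\widehat{\phi}(w)=\widehat{\psi}(w)$. So, after pulling back by $\theta$, the function on $\Omega$ looks like $\widehat{\psi}$ extended trivially along the $G/H$-fibre, and non-degeneracy of the critical orbit transfers through $\theta$ because it is a diffeomorphism.

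Next I would write down local coordinates on $G\times_H B_{\epsilon}(v_0,\bW)$ near $[e,w_i]$. Splitting $\mathfrak{g}=\mathfrak{h}\oplus\mathfrak{m}$ orthogonally (with $\mathfrak{h}$ the Lie algebra of $H$), the map $(X,w)\mapsto[\exp(X),w]$ is a local diffeomorphism from a neighbourhood of $(0,w_i)$ in $\mathfrak{m}\times B_{\epsilon}(v_0,\bW)$ onto a neighbourhood of $[e,w_i]$. In these coordinates the pullback of $\widehat{\phi}$ is simply $(X,w)\mapsto\widehat{\psi}(w)$, so its Hessian at $(0,w_i)$ is block diagonal with a zero block on $\mathfrak{m}$ and the block $\nabla^2\widehat{\psi}(w_i)$ on $\bW$. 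In the same coordinates I would identify the tangent space to the orbit $G\cdot[e,w_i]$: the $\mathfrak{m}$-direction parametrises the $G/H$-fibre through $[e,w_i]$ and lies entirely in the orbit, and within the slice $\bW$ the orbit meets $\bW$ exactly in $H(w_i)$ by the previous lemma, so the orbit tangent decomposes as $\mathfrak{m}\oplus T_{w_i}H(w_i)=\mathfrak{m}\oplus\mathfrak{h}\cdot w_i$.

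Since $\widehat{\phi}$ is $\Omega$-Morse, $G(v_i)$ is a non-degenerate critical orbit of $\widehat{\phi}$, hence $G\cdot[e,w_i]$ is a non-degenerate critical orbit of the pullback, which by definition means that the kernel of the Hessian equals the tangent to the orbit. Comparing this with the block-diagonal form above forces
$$\mathfrak{m}\oplus\ker\nabla^2\widehat{\psi}(w_i)=\mathfrak{m}\oplus T_{w_i}H(w_i),$$
so $\ker\nabla^2\widehat{\psi}(w_i)=T_{w_i}H(w_i)$, which is precisely non-degeneracy of $H(w_i)$ as a critical $H$-orbit of $\widehat{\psi}$. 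The main obstacle I anticipate is purely bookkeeping rather than something substantial: verifying that the chosen chart actually separates the $\mathfrak{m}$- and $\bW$-directions so that the Hessian really is block diagonal in the asserted way, and correctly identifying the tangent to $G\cdot[e,w_i]$ as $\mathfrak{m}\oplus\mathfrak{h}\cdot w_i$. Once the slice-theorem coordinates are set up, the conclusion is a single block-matrix comparison.
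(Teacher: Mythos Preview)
Your argument is correct and is essentially the same approach as the paper's, only more explicit. The paper argues by contradiction directly in $\bV$: assuming $\ker\nabla^2\widehat{\psi}(w_i)=T_{w_i}H(w_i)\oplus\bU_i$ with $\dim\bU_i>0$, it asserts the two decompositions $\ker\nabla^2\widehat{\phi}(w_i)=T_{v_0}G(v_0)\oplus\ker\nabla^2\widehat{\psi}(w_i)$ and $T_{w_i}G(w_i)=T_{v_0}G(v_0)\oplus T_{w_i}H(w_i)$, which together contradict non-degeneracy of $G(w_i)$; your chart on $G\times_H B_\epsilon(v_0,\bW)$ is precisely what justifies these two decompositions, so the content is the same at a finer level of detail.
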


\begin{proof}
Suppose that the orbit $H(w_i)$ is a degenerate critical orbit of $\widehat{\psi}$, i.e. $\ker \nabla^2 \widehat{\psi}(w_i)=T_{w_i} H(w_i)\oplus \bU_i$, where $\bU_i$ is a linear space such that $\dim \bU_i>0$. Then, since $T_{v_0}G(v_0)\oplus T_{w_i}H(w_i)=T_{w_i}G(w_i)$, we obtain $$\ker \nabla^2 \widehat{\phi}(w_i)=T_{v_0} G(v_0)\oplus \ker \nabla^2 \widehat{\psi}(w_i)= T_{v_0} G(v_0) \oplus T_{w_i} H(w_i)\oplus \bU_i =T_{w_i}G(w_i)\oplus \bU_i,$$
which contradicts non-degeneracy of $G(w_i)$ as a critical orbit of $\widehat{\phi}$.
\end{proof}

Therefore, we have proved that $\widehat{\psi}$ is an $H$-invariant $B_{\epsilon}(v_0,\bW)$-Morse function associated with $\psi.$ Hence,
\begin{equation}\label{eq:deg2}
\degh(\nabla \psi, B_{\epsilon}(v_0,\bW))=\degh(\nabla \widehat{\psi}, B_{\epsilon}(v_0,\bW)).
\end{equation}

From equations \eqref{eq:deg1} and \eqref{eq:deg2} we see that in order to obtain a relation of equivariant degrees of $\nabla \phi$ and $\nabla \psi$ we need to establish such a relation for $\nabla \widehat{\phi}$ and $\nabla \widehat{\psi}$.

Such a result has been given in \cite{GolSte} (as a part of the proof of Theorem 2.6). Note that we can apply this result, since $\widehat{\phi}$ is the $G$-extension of $\widehat{\psi}$, given by $\widehat{\phi}(gw)=\widehat{\psi}(w)$ for $w \in \bW, g \in G$.  From the slice theorem it follows that the $G$-extension to the space $G \cdot B_{\epsilon}(v_0,\bW)$ is unique, see Theorem \ref{thm:uniq}.

\begin{Theorem}\label{thm:degrees}
Let $\widehat{\phi},\widehat{\psi}$ and $\Omega$ be defined as above. Denote by  $(H_{i_0})_H, (H_{i_1})_H, \ldots, (H_{i_s})_H$ all possible conjugacy classes of the groups $H_{w_1}, \ldots, H_{w_q}$ in $H$ and let $m_1,\ldots, m_s$ be such that $$\degh(\nabla \widehat{\psi}, B_{\epsilon}(v_0,\bW))=\sum_{j=1}^s m_j \cdot\chi_H(H/H_{i_j}^+) \in U(H).$$
Then $$\degg(\nabla \widehat{\phi}, \Omega)=\sum_{(K)_G \in \overline{sub}[G]} n_{(K)} \cdot \chi_G(G/K^+) \in U(G),$$
where $$n_{(K)}= \sum_{(H_{w_i})_G=(K)_G} m_j$$
and $\overline{sub}[G]$ is the set of conjugacy classes of closed subgroups of $G$.
\end{Theorem}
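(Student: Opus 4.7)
The plan is to exploit the fact that both $\widehat{\phi}$ and $\widehat{\psi}$ are Morse functions with the same finite set of critical orbits, tracked through the slice isomorphism. By additivity (excision) of the equivariant gradient degree, I first split both degrees as finite sums over the respective critical orbits. Shrinking $\epsilon$ if necessary and using Lemma \ref{lem:orbittypes} together with the preceding lemma, I choose pairwise disjoint $H$-invariant open neighbourhoods $U_i \subset B_{\epsilon}(v_0,\bW)$ of $H(w_i)$ and set $\Omega_i = G \cdot U_i$; these are pairwise disjoint $G$-invariant open subsets of $\Omega$ with $G(v_i) \subset \Omega_i$ and $(\nabla\widehat{\phi})^{-1}(0)\cap \Omega_i = G(v_i)$. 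Then
\[
\degh(\nabla\widehat{\psi},B_{\epsilon}(v_0,\bW)) = \sum_{i=1}^{q} \degh(\nabla\widehat{\psi},U_i), \qquad \degg(\nabla\widehat{\phi},\Omega) = \sum_{i=1}^{q} \degg(\nabla\widehat{\phi},\Omega_i).
\]

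Next I would invoke the standard formula for the gradient degree of a non-degenerate critical orbit. Since $H(w_i)$ is a non-degenerate critical orbit of $\widehat{\psi}$, its contribution is
\[
\degh(\nabla\widehat{\psi},U_i) = (-1)^{\mu_i}\,\chi_H(H/H_{w_i}^+),
\]
where $\mu_i$ is the Morse index of $\widehat{\psi}|_{T_{w_i}^{\perp}H(w_i)}$ at $w_i$ (the index computed on the normal slice inside $\bW$). Analogously, for the $G$-orbit $G(v_i)$ non-degenerate for $\widehat{\phi}$,
\[
\degg(\nabla\widehat{\phi},\Omega_i) = (-1)^{\nu_i}\,\chi_G(G/G_{v_i}^+),
\]
with $\nu_i$ the Morse index on $T_{v_i}^{\perp}G(v_i)$. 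The key identification is $\nu_i = \mu_i$: the relation $\ker\nabla^2\widehat{\phi}(w_i) = T_{v_0}G(v_0)\oplus \ker\nabla^2\widehat{\psi}(w_i)$ used in the previous lemma, together with the fact that $\widehat{\phi}$ is constant along $G$-orbits, gives the orthogonal decomposition $T_{w_i}^{\perp}G(w_i) = T_{w_i}^{\perp}H(w_i)$ inside $\bW$, and $G$-invariance of $\widehat{\phi}$ implies that the Hessian of $\widehat{\phi}$ on $T_{v_i}^{\perp}G(v_i)$ is orthogonally conjugate via left translation by $g$ (where $v_i = gw_i$) to the Hessian of $\widehat{\psi}$ on $T_{w_i}^{\perp}H(w_i)$, matching the signatures.

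It remains to regroup. By Lemma \ref{lem:orbittypes}, $G_{v_i} = gH_{w_i}g^{-1}$, so $(G_{v_i})_G = (H_{w_i})_G$, while two indices $w_i,w_{i'}$ with $(H_{w_i})_H = (H_{w_{i'}})_H$ automatically satisfy $(H_{w_i})_G = (H_{w_{i'}})_G$. Collecting terms with equal $H$-conjugacy class of isotropy gives $m_j = \sum_{(H_{w_i})_H = (H_{i_j})_H}(-1)^{\mu_i}$, and collecting further by $G$-conjugacy class yields
\[
\degg(\nabla\widehat{\phi},\Omega) = \sum_{(K)_G\in\sub[G]}\Bigl(\sum_{(H_{w_i})_G=(K)_G}(-1)^{\mu_i}\Bigr)\chi_G(G/K^+) = \sum_{(K)_G\in\sub[G]} n_{(K)}\,\chi_G(G/K^+),
\]
which is the claim.

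The main obstacle is the verification that the local $H$-normal Morse index on $\bW$ coincides with the local $G$-normal Morse index on $\bV$ at corresponding points; everything else is bookkeeping. Handling this cleanly requires the slice decomposition (Theorem \ref{thm:slice}) to identify the normal bundle of $G(v_i)$ with the $G$-associated bundle of the $H$-normal bundle of $H(w_i)$, after which the coincidence of signatures follows because the $G$-extension $\widehat{\phi}(gw)=\widehat{\psi}(w)$ leaves the Hessian on normal directions intact up to the orthogonal transformation induced by $g$.
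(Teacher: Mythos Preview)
Your overall architecture---splitting both degrees by additivity over the critical orbits, matching the local contributions through the slice, and then regrouping by $G$-conjugacy class---is correct and is essentially the route taken in \cite{GolSte}, which the paper cites for this result rather than reproving it. The gap is in the local step. The identity
\[
\degh(\nabla\widehat{\psi}, U_i) = (-1)^{\mu_i}\,\chi_H(H/H_{w_i}^+)
\]
that you invoke as ``the standard formula'' holds only for \emph{special} Morse functions in G\c{e}ba's sense, namely those for which the negative eigenspace of the normal Hessian at each critical point is a trivial representation of the isotropy group. For a merely non-degenerate orbit the local equivariant gradient degree is the $H$-equivariant Euler characteristic of the Thom space of the negative normal bundle over $H/H_{w_i}$, and it typically has nonzero coefficients at conjugacy classes strictly smaller than $(H_{w_i})_H$. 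Concretely, take $H=SO(2)$ acting on $\bW=\bR^2$ by rotations and $\widehat{\psi}(w)=-\tfrac12|w|^2$: the origin is non-degenerate with isotropy $SO(2)$ and Morse index~$2$, yet $\nabla_{SO(2)}\text{-}\deg(-Id,B(\bR^2))=\bI-\chi_{SO(2)}(SO(2)/\{e\}^+)\neq(-1)^2\,\bI$. With your formula the $(\{e\})$-coefficient on the $H$-side is lost, and with it the corresponding coefficient on the $G$-side.

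The repair is to insert one further approximation: replace $\widehat{\psi}$ by an associated \emph{special} $H$-Morse function $\widehat{\psi}_s$ (as in G\c{e}ba's construction of the degree) and check that its $G$-extension---which by Theorem~\ref{thm:uniq} agrees with the corresponding approximation of $\widehat{\phi}$---is again a special $G$-Morse function. For special Morse functions your sign formula \emph{is} valid, the normal-space identification $T_{w_i}^{\perp}G(w_i)=T_{w_i}^{\perp}H(w_i)\subset\bW$ and the resulting Morse-index equality $\mu_i=\nu_i$ that you sketch go through unchanged, and the regrouping by $G$-conjugacy then yields the assertion. This passage to special Morse functions is precisely the extra ingredient supplied in the proof in \cite{GolSte} that the paper invokes.
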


Now we turn our attention to the problem of comparing degrees. Consider two $G$-invariant  functions $\phi_1, \phi_2 \in C^2(\bV, \bR)$ with critical orbits $G(v_i)$ isolated in $(\nabla \phi_i)^{-1}(0)$ for $i=1,2$. Suppose that $G_{v_1}=G_{v_2}= H$. In the corollary below we study the special case when $(G,H)$ is an admissible pair. The concept of an admissible pair has been introduced in \cite{PRS}. The pair $(G,H)$, where $H$ is a subgroup of $G$, is called admissible, if the following condition is satisfied: for any subgroups $H_1,H_2$ of $H$ if $H_1$ and $H_2$ are not conjugate in $H$, then they are not conjugate in $G$.
It is worth to point out, that in the considered applications to elliptic systems the admissibility assumption is satisfied in a natural way.

\begin{Corollary}\label{cor:differentfinitedegrees}
Consider $G$-invariant functions $\phi_i \in C^2(\bV, \bR)$ with critical orbits $G(v_i)$ isolated in $(\nabla \phi_i)^{-1}(0)$ for $i=1,2$. Suppose that $G_{v_1}=G_{v_2}= H$ and
put $\bW_i=T_{v_i}^{\bot} G(v_i)$. Fix $\Omega_i=G\cdot B_{\epsilon}(v_i,\bW_i)$ such that $(\nabla\phi_i)^{-1}(0)\cap \Omega_i= G(v_i)$ and $\epsilon$ satisfies Theorem \ref{thm:slice}. Define $\psi_i\colon B_{\epsilon}(v_i,\bW_i)\to\bR$ by $\psi_i=\phi_{i|B_{\epsilon}(v_i,\bW_i)}$.
If $(G,H)$ is an admissible pair and
\begin{equation}\label{eq:finite1}
\nabla_H\text{-}\deg(\nabla\psi_1,B_{\epsilon}(v_1,\bW_1))\neq \nabla_H\text{-}\deg(\nabla\psi_2,B_{\epsilon}(v_2,\bW_2)),
\end{equation}
then
\begin{equation}\label{eq:finite2}
\nabla_G\text{-}\deg(\nabla\phi_1, \Omega_1)\neq \nabla_G\text{-}\deg(\nabla\phi_2, \Omega_2).
\end{equation}
\end{Corollary}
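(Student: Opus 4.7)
The plan is to reduce the comparison of the $G$-degrees on $\Omega_1$ and $\Omega_2$ to a coordinate-wise comparison of the $H$-degrees on the slices via Theorem \ref{thm:degrees}, and then exploit the admissibility hypothesis to show that the assignment from the $H$-coefficients to the $G$-coefficients is injective at the relevant place.

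First, I would expand both $H$-degrees in the additive basis of $U(H)$:
\[
\nabla_H\text{-}\deg(\nabla\psi_i, B_{\epsilon}(v_i,\bW_i)) = \sum_{(K)_H \in \overline{sub}[H]} \alpha^{(i)}_{(K)_H}\, \chi_H(H/K^+), \quad i=1,2.
\]
Hypothesis \eqref{eq:finite1} then translates to the existence of some $(K_0)_H \in \overline{sub}[H]$ with $\alpha^{(1)}_{(K_0)_H} \neq \alpha^{(2)}_{(K_0)_H}$. Applying to each $\phi_i$ the construction developed earlier in the section (an $\Omega_i$-Morse approximation $\widehat{\phi}_i$ of $\phi_i$, its restriction $\widehat{\psi}_i = \widehat{\phi}_i|_{\bW_i}$, and the identities \eqref{eq:deg1} and \eqref{eq:deg2}), followed by Theorem \ref{thm:degrees}, I obtain
\[
\nabla_G\text{-}\deg(\nabla\phi_i, \Omega_i) = \sum_{(L)_G \in \overline{sub}[G]} \beta^{(i)}_{(L)_G}\, \chi_G(G/L^+), \qquad \beta^{(i)}_{(L)_G} = \sum_{\substack{(K)_H \in \overline{sub}[H] \\ (K)_G = (L)_G}} \alpha^{(i)}_{(K)_H},
\]
where the inner sum is over those $H$-conjugacy classes of subgroups of $H$ whose $G$-conjugacy class equals $(L)_G$.

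The decisive step is the admissibility argument. By the very definition of an admissible pair, two subgroups of $H$ that are not $H$-conjugate remain non-conjugate in $G$, so the natural map $\iota : \overline{sub}[H] \to \overline{sub}[G]$, $(K)_H \mapsto (K)_G$, is injective. Consequently, for $(L_0)_G := \iota((K_0)_H)$, the inner sum defining $\beta^{(i)}_{(L_0)_G}$ collapses to a single term, yielding $\beta^{(i)}_{(L_0)_G} = \alpha^{(i)}_{(K_0)_H}$. The assumed inequality $\alpha^{(1)}_{(K_0)_H} \neq \alpha^{(2)}_{(K_0)_H}$ thus propagates to $\beta^{(1)}_{(L_0)_G} \neq \beta^{(2)}_{(L_0)_G}$, so the coefficients of $\chi_G(G/L_0^+)$ in the two $G$-degrees disagree, giving \eqref{eq:finite2}.

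The step requiring the most care is ensuring that the isotropy groups contributing to the formula of Theorem \ref{thm:degrees} really are subgroups of $H$ (not merely of $G$), since this is what permits the admissibility hypothesis to be invoked directly as an injectivity statement on $\overline{sub}[H]$. Lemma \ref{lem:orbittypes}, together with the explicit description of the critical orbits of $\widehat{\psi}_i$ inside $B_\epsilon(v_i,\bW_i)$ provided by the slice theorem, guarantees exactly this, and the remainder of the argument is purely formal.
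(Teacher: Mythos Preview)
Your proof is correct and matches the paper's intended (but unwritten) argument: the corollary is stated immediately after Theorem~\ref{thm:degrees} and the definition of admissible pair precisely because it follows by the coefficient-by-coefficient comparison you carry out, using that admissibility makes the map $(K)_H \mapsto (K)_G$ injective on $\overline{sub}[H]$. Your care in invoking \eqref{eq:deg1}, \eqref{eq:deg2} and Lemma~\ref{lem:orbittypes} to pass from $\phi_i,\psi_i$ to their Morse approximations before applying Theorem~\ref{thm:degrees} is exactly right.
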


\begin{Remark}
With similar assumptions, a counterpart of the above result can be also formulated in the infinite dimensional case. Then $\phi_i, \psi_i$ instead of being finite dimensional maps can be completely continuous perturbations of the identity or, in a more general setting, completely continuous perturbations of some Fredholm operators. In the former case one can apply the degree for equivariant gradient maps in infinite dimensional spaces (see Appendix), whereas in the latter case the degree for invariant strongly indefinite functionals (see \cite{GolRyb2011}) can be used.
However, since in the proofs of our results we restrict our attention to the finite dimensional maps, we do not give the precise formulation of the infinite dimensional result.
Such a relation in the non-degenerate infinite dimensional case has been formulated in Theorem 2.8 of \cite{GolSte}.
\end{Remark}


\section{Elliptic systems}\label{sec:elliptic}
\subsection{Formulation of the problem}
Our aim in this section is to study bifurcations of weak solutions of some nonlinear problems, parameterised by $\lambda \in \bR$. We consider two different types of problems, namely the system defined on the $(N-1)$-dimensional unit sphere $S^{N-1}$:
\begin{equation}\label{eq:sphere}
- \Delta u =  \nabla_u F(u, \lambda) \quad \text{ on } \quad S^{N-1}
\end{equation}
and the Neumann problem on the $N$-dimensional open unit ball $B^{N}$:
\begin{equation}\label{eq:ball}
\left\{
 \begin{array}{rclcl}   -\triangle u & =& \nabla_u F(u,\lambda )   & \text{ in   } & B^N \\
                   \frac{\partial u}{\partial \nu} & =  & 0 & \text{ on    } & S^{N-1}.
\end{array}\right.
\end{equation}
We consider these systems with analogous assumptions. Moreover, the general setting in both problems is similar. Therefore, in the rest of this subsection, we describe both cases simultaneously.

We assume that the considered systems satisfy the following conditions:
\begin{enumerate}
\item[(B1)] $\bR^p$ is an orthogonal representation of a compact Lie group $\Gamma.$
\item[(B2)] $F\in C^2(\bR^p\times\bR,\bR)$ is $\Gamma$-invariant with respect to the first variable, i.e.  $F(\gamma u,\lambda) =F(u,\lambda)$ for every $\gamma \in \Gamma$, $u\in\bR^p$, $\lambda\in\bR$. Moreover,
there exist $C>0$ and $s\in [1,(N+2)(N-2)^{-1})$ such that $|\nabla^2_u F(u,\lambda)|\leq C(1+|u|^{s-1})$ (if $N=2$, we assume that $s\in[1,+\infty))$.
\item[(B3)] $u_0\in\bR^p$ is a critical point of $F(\cdot,\lambda)$ for all $\lambda \in \bR$ and there exists a symmetric matrix $A$ such that $\nabla_u^2 F(u_0,\lambda)=\lambda A$,
\item[(B4)] $\Gamma_{u_0}=\{e\}$,
\end{enumerate}
From (B2)-(B3) it follows that $\Gamma(u_0)  \subset (\nabla_u F(\cdot,\lambda))^{-1}(0)$ for every $\lambda \in \bR$.
We assume:
\begin{enumerate}
\item[(B5)] $\Gamma(u_0)\times \bR$ is isolated in the set of all critical points of $F$.
\item[(B6)] $\deg_B(\nabla_u F|_{T_{u_0}^{\perp}\Gamma(u_0)}(\cdot,\lambda), B_{\epsilon}(T_{u_0}^{\perp}\Gamma(u_0)),0)\neq 0$ for every $\lambda\in\bR\setminus\{0\}$ and sufficiently small $\epsilon$, where $\deg_B$ denotes the Brouwer degree.
\end{enumerate}

\begin{Remark}
The assumption (B6) is given in terms of the Brouwer degree of some map on a space normal to the orbit. It is worth to point out that in some cases this assumption is easy to verify. For example, it is satisfied in the case of a non-degenerate critical orbit or if the orbit consists of minima of the potential, see for example the proofs of Theorems 5.6-5.7 of \cite{Strzelecki}. Additionally, it is easy to see that in the former case the assumption (B5) is also satisfied.
\end{Remark}

We are going to study bifurcations of non-constant solutions of these systems, by applying the results from the previous section. To this end we will formulate the problem in a variational setting, i.e. we will consider weak solutions as critical points of some associated functionals. We define such functionals on  appropriate Hilbert spaces. More precisely, consider $\cM\in\{B^N,S^{N-1}\}$, denote by $H^1(\cM)$ the Sobolev space on $\cM$ with the inner product
\begin{equation*} \langle v, w \rangle_{H^1(\cM)} =   \int\limits_{\cM} (\nabla v(x), \nabla w(x)) + v(x)\cdot w(x) dx
\end{equation*}
and consider a separable Hilbert space $\bH=
\bigoplus_{i=1}^{p} H^1(\cM)$ with the scalar product
\begin{equation}\label{iloczyn} \langle v, w \rangle_{\bH} = \ds \sum_{i=1}^p \langle v_i, w_i\rangle_{H^1(\cM)}.
\end{equation}

Weak solutions of the problems \eqref{eq:sphere} and \eqref{eq:ball} are in one-to-one correspondence with critical points of the functional $\Phi \colon \bH \times \bR \to \bR$
defined by
\begin{equation}\label{eq:Phi}
\Phi (u,\lambda) = \frac{1}{2} \int\limits_{\cM} |\nabla u(x)|^2 dx- \int\limits_{\cM} F(u(x),\lambda)dx.
\end{equation}

Denote by $\tilde{u}_0\in\bH$ the constant function $\tilde{u}_0(x)\equiv u_0$.
A standard computation (see for example \cite{GolKlu} for a similar reasoning) shows that the gradient of the functional $\Phi$ has the following form
$$\nabla_u \Phi (u, \lambda)= u - \tilde{u}_0 - L_{\lambda A}(u-\tilde{u}_0)+ \nabla \eta(u-\tilde{u}_0,\lambda),$$
where $L_{\lambda A}\colon\bH \to \bH$ is given by
\begin{equation*}
\langle L_{\lambda A} u, v \rangle_{\h} = \int\limits_{\cM} (u(x), v(x)) + (\lambda A u(x), v(x)) dx \text{ for all } v\in\bH. \end{equation*}
 Moreover $L_{\lambda A}$ is a self-adjoint, bounded and completely continuous operator
and $\nabla_u\eta\colon\bH\times\bR\to\bH$ is a completely continuous operator such that $\nabla_u\eta(0,\lambda)=0$, $\nabla^2_u\eta(0,\lambda)=0$ for every $\lambda\in\bR$.

The systems \eqref{eq:sphere} and \eqref{eq:ball} have two kinds of symmetries - the $\Gamma$-invariance of the potential $F$ and the $SO(N)$-invariance of the domain $\cM$.
These symmetries are inherited by the associated functional. More precisely, it is easy to check that the space $\bH$, with the product given by \eqref{iloczyn}, is an orthogonal representation of $G = \Gamma \times SO(N)$, where the action is  given by
\begin{equation}\label{eq:action}
(\gamma, \alpha) (u)(x)= \gamma u({\alpha}^{-1}x)\  \text{ for }\  (\gamma, \alpha) \in G, u \in \h, x\in \cM.
\end{equation}
This definition and the assumption (B2) imply that the functional $\Phi$ is $G$-invariant.

Since $u_0 \in (\nabla_u F(\cdot,\lambda))^{-1}(0)$, the function $\tilde{u}_0$ is a solution of the problem \eqref{eq:sphere} (respectively the problem \eqref{eq:ball}) for all $\lambda \in \bR$.
By the definition of the action of the group $G$ on $\bH$ given by \eqref{eq:action} we obtain that $G(\tilde{u}_0)=\Gamma(\tilde{u}_0)$ is an orbit of weak solutions for all $\lambda \in \mathbb{R}$. We are going to study bifurcations from this family, i.e. from $G(\tilde{u}_0) \times \bR$.

\begin{Definition}\label{def:globbif}
We say that a global bifurcation of solutions of $\nabla_u \Phi(u,\lambda)=0$ occurs from the orbit $G(\tilde{u}_0)\times \{\lambda_0\}$ if there is a connected component $\cC(\tilde{u}_0,\lambda_0)$ of $$cl \{(u, \lambda) \in (\bH \times \bR) \setminus (G(\tilde{u}_0) \times \bR)\colon \nabla_u\Phi(u, \lambda)=0\}$$ containing $(\tilde{u}_0,\lambda_0)$ and such that either $\cC(\tilde{u}_0,\lambda_0)\cap (G(\tilde{u}_0) \times (\bR\setminus \{\lambda_0\})) \neq \emptyset$ or $\cC(\tilde{u}_0,\lambda_0)$ is unbounded.
\end{Definition}

Note that if a global bifurcation occurs, we obtain a connected component of the set of nontrivial solutions for every $v\in G(\tilde{u}_0)$ (by nontrivial solutions we understand solutions different than those from the family $G(\tilde{u}_0) \times \bR$).
In particular, if the group $G$ is connected, we obtain a connected set of nontrivial solutions bifurcating from $G(\tilde{u}_0)\times \{\lambda_0\}$.

To investigate the global bifurcation problem we apply the degree for equivariant gradient maps, see the appendix. From the above reasoning it follows that $\nabla_u \Phi (\cdot, \lambda)$ is $G$-equivariant and of the form of a completely continuous perturbation of the identity for every $\lambda\in\bR$. Therefore to be able to apply the degree it only remains to define a $G$-equivariant approximation scheme on the space $\bH$. This scheme is defined with the use of the eigenspaces of the Laplacian, therefore  we first recall some basic spectral properties of this operator.

Denote by $\sigma(-\Delta;\cM) = \{ 0=\beta_1 < \beta_2 < \ldots < \beta_k < \ldots\}$ the set of all distinct eigenvalues of the Laplacian on $\cM$. Let $\bV_{-\Delta}(\beta_k)$ be the eigenspace of $-\Delta$ corresponding to an eigenvalue $\beta_k$.
From the spectral theorem it follows that $H^1(\cM) = cl(\bigoplus_{k=1}^{\infty} \bV_{-\Delta} (\beta_k)).$
Therefore it is natural to define the approximation scheme in the following way: consider $\bH^n=\bigoplus_{k=1}^n \bH_k$, where $\bH_k=\bigoplus_{i=1}^p \bV_{-\Delta}(\beta_k)$ and a natural $G$-equivariant projection $\pi_n\colon\bH \to \bH$ such that $\pi_n(\bH) = \bH^n$ for $n\in \bN$.
Then $\{\pi_n\colon \bH \to \bH\colon n \in \bN\}$ defines a $G$-equivariant approximation scheme on $\bH.$

\begin{Remark}\label{rem:linearequation}
A standard computation shows that
\[
\sigma(Id-L_{\lambda A})=\left\{
 \frac{\beta_k-\lambda\alpha_j}{1+\beta_k}\colon \alpha_j \in\sigma(A), \beta_k\in \sigma(-\Delta; \cM)\right\},
\]
where $\sigma(\cdot)$ denotes the spectrum of a linear operator. For the details we refer the reader for example to the proof of Lemma 3.2 in \cite{GolKlu}.
\end{Remark}

In the proofs of the main theorems of this section we consider the eigenspaces of the Laplacian as representations of the group $SO(N)$.
We recall the characterisation of their nontriviality in the following lemma.

\begin{Lemma}\label{lem:irred}
Fix $\beta\in\sigma(-\Delta;\cM)$.
\begin{enumerate}
\item If $\cM=S^{N-1}$, then for all $\beta\neq 0$ the eigenspaces $\bV_{-\Delta}(\beta)$ are irreducible nontrivial $SO(N)$-representations and $\bV_{-\Delta}(0)$ is a trivial one.
\item If $\cM=B^N$ and $\dim\bV_{-\Delta}(\beta)>1$, then the eigenspace $\bV_{-\Delta}(\beta)$ is a nontrivial $SO(N)$-representation. If  $\dim\bV_{-\Delta}(\beta)=1$, then the corresponding eigenspace is trivial. \end{enumerate}
\end{Lemma}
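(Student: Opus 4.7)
The plan is to exploit separation of variables for $-\Delta$ on each of the domains $\cM$, together with the classical decomposition of $L^2(S^{N-1})$ into spaces of spherical harmonics.

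For part (1), recall that the eigenvalues of $-\Delta$ on $S^{N-1}$ are $\beta_k = k(k+N-2)$ for $k = 0, 1, 2, \ldots$, and that $\bV_{-\Delta}(\beta_k)$ coincides with the space $\cH_k$ of real spherical harmonics of degree $k$. It is a classical fact (see, e.g., Stein--Weiss) that each $\cH_k$ is a real irreducible $\son$-module. For $k = 0$, $\cH_0$ consists of constant functions and is hence a trivial one-dimensional representation; for $k \geq 1$, $\cH_k$ contains non-constant functions (which are genuinely permuted by rotations), so the representation is nontrivial. This already proves part (1).

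For part (2), I would perform separation of variables in polar coordinates $x = r\theta$ on $B^N$. Every Neumann eigenfunction of $-\Delta$ is a finite sum of products $R(r)Y(\theta)$ with $Y \in \cH_k$ and $R$ regular at the origin, $R'(1) = 0$, satisfying the associated radial Bessel-type equation
\begin{equation*}
-R''(r) - \frac{N-1}{r} R'(r) + \frac{k(k+N-2)}{r^2} R(r) = \beta R(r).
\end{equation*}
For each fixed $k$ this is a regular Sturm--Liouville problem with simple, discrete spectrum $\beta_{k,1} < \beta_{k,2} < \ldots$. Hence, as $\son$-modules,
\begin{equation*}
\bV_{-\Delta}(\beta) \;=\; \bigoplus_{k \,:\, \beta_{k,m} = \beta \ \text{for some}\ m} \cH_k,
\end{equation*}
each $\cH_k$ appearing with multiplicity exactly one.

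The dichotomy now follows immediately. Since $\dim \cH_k \geq 2$ for every $k \geq 1$ (and $N \geq 2$), the condition $\dim \bV_{-\Delta}(\beta) = 1$ forces only the term $k = 0$ to appear, so $\bV_{-\Delta}(\beta) \cong \cH_0$ is the trivial representation. Conversely, if $\dim \bV_{-\Delta}(\beta) > 1$, the simplicity of the $k=0$ radial spectrum means that the $\cH_0$-isotypic component contributes at most $1$ to the dimension, so some $\cH_k$ with $k \geq 1$ must appear, and it already carries a nontrivial $\son$-action. The only point requiring care is possible accidental coincidences $\beta_{k,m} = \beta_{k',m'}$ for different spherical degrees $k \neq k'$; these are painlessly absorbed into the direct sum above and do not affect the dichotomy, as the decisive input is only the simplicity of the radial spectrum within each fixed $k$.
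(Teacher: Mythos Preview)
Your argument is correct. The paper itself does not actually prove this lemma: it simply cites Gurarie (Theorem~5.1) for the sphere case and Remark~5.11 of \cite{GolKluSte} for the ball case. What you have written is exactly the standard separation-of-variables and spherical-harmonics computation underlying those references, so you are supplying the details the paper omits rather than taking a different route.
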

For the proof in the case of a sphere we refer to \cite{Gurarie} (Theorem 5.1), in the second one to Remark 5.11 of \cite{GolKluSte}.

\subsection{Global bifurcations of solutions of the system on a sphere}\label{subsec:sphere}

From now on we consider the systems separately, starting with the one on a sphere:
\begin{equation} \label{eq:sphere1}
   - \Delta u =  \nabla_u F(u, \lambda) \quad \text{ on } \quad S^{N-1}
\end{equation}
under the assumptions (B1)-(B6).
For such a system we can describe exactly all the levels where a bifurcation can occur.
Such a description is given in the lemma below. Its proof is similar in spirit to the one of Theorem 3.2.1 of \cite{PRS2} and it is based on the fact that the only radial eigenfunctions of the Laplacian on a sphere are constant functions.

Put
\[
\Lambda = \bigcup_{\alpha_j \in \sigma(A) \setminus\{0\}} \bigcup_{\beta_k \in \sigma(-\Delta;S^{N-1})\setminus \{0\}} \left\{\frac{\beta_k}{\alpha_j}\right\}.
\]

\begin{Lemma}\label{lemma:warunekkonieczny}
 If $(\tilde{u}_0, \lambda_0)$ is an accumulation point of nontrivial solutions of the system \eqref{eq:sphere1}, then $\lambda_0 \in \Lambda$.
\end{Lemma}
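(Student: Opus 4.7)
The plan is to argue by contradiction via linearization along the trivial branch. Suppose $(u_n,\lambda_n)\to(\tilde{u}_0,\lambda_0)$ is a sequence of nontrivial solutions with $\lambda_0\notin\Lambda$, and set $v_n=u_n-\tilde{u}_0$. Using the explicit form of the gradient recalled above, the equation $\nabla_u\Phi(u_n,\lambda_n)=0$ reads
\[
v_n-L_{\lambda_n A}v_n+\nabla\eta(v_n,\lambda_n)=0.
\]
Normalising $w_n=v_n/\|v_n\|_{\bH}$ and exploiting the complete continuity of the family $L_{\lambda A}$ together with $\nabla^2\eta(0,\lambda)=0$ (which forces $\nabla\eta(v,\lambda)=o(\|v\|_{\bH})$ as $v\to 0$), a standard compactness argument yields, along a subsequence, strong convergence $w_n\to w$ with $\|w\|_{\bH}=1$ and $w\in\ker(Id-L_{\lambda_0 A})$.

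By Remark \ref{rem:linearequation}, the existence of such a nonzero $w$ requires a pair $(\alpha_j,\beta_k)\in\sigma(A)\times\sigma(-\Delta;S^{N-1})$ with $\beta_k=\lambda_0\alpha_j$. If some such pair has both $\alpha_j\ne 0$ and $\beta_k\ne 0$, then $\lambda_0=\beta_k/\alpha_j\in\Lambda$, which contradicts the hypothesis and completes the proof. The only remaining possibility is that every contributing pair has $\beta_k=0$, so by Lemma \ref{lem:irred} the limit $w$ lies in the trivial $SO(N)$-representation $\bH_1$; equivalently, $w$ is a constant $\bR^p$-valued function on $S^{N-1}$. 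This degenerate scenario is allowed precisely because the only radial eigenfunctions of $-\Delta$ on the sphere are the constants.

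The main step, and the main obstacle, is to rule out this degenerate possibility. Since $\lambda_0\notin\Lambda$, all eigenvalues $(\beta_k-\lambda\alpha_j)/(1+\beta_k)$ of $Id-L_{\lambda A}$ for $\beta_k>0$ stay uniformly bounded away from zero for $\lambda$ near $\lambda_0$, so the restriction of $Id-L_{\lambda_n A}$ to $\bH_1^{\perp}$ is uniformly invertible. Projecting the nonlinear equation onto $\bH_1^{\perp}$ and inverting yields $\|P_1^{\perp}v_n\|_{\bH}=o(\|v_n\|_{\bH})$, so $v_n$ is asymptotically a constant function. Combining this with the integration-by-parts identity $\int_{S^{N-1}}\nabla_u F(u_n,\lambda_n)\,dx=0$ (valid since $S^{N-1}$ is closed), a Taylor expansion of $\nabla_u F$ around $u_0+P_1 v_n$, and elliptic regularity to control $\|P_1^{\perp}v_n\|_\infty$ by $\|P_1^{\perp}v_n\|_{\bH}$, forces the approximate critical-point condition $\nabla_u F(u_0+P_1 v_n,\lambda_n)=o(\|P_1 v_n\|_{\bH})$. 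Finally, the $G$-equivariance and the slice theorem allow one to replace $u_n$ by a $G$-translate so that $P_1 v_n\in T_{u_0}^{\perp}\Gamma(u_0)$, and a Lyapunov--Schmidt reduction on this finite-dimensional slice combined with the isolation assumption (B5) shows that the reduced equation admits only $P_1 v_n=0$, whence $v_n=0$ for large $n$, contradicting the nontriviality of $u_n$.
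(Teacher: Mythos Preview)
Your overall strategy is sound and parallels the paper's: both arguments ultimately show that if $\lambda_0\notin\Lambda$ then all nearby solutions must be constant, whence (B5) yields the contradiction. However, your execution has a genuine gap in the last paragraph.

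The problem is the passage from the asymptotic estimate $\nabla_u F(u_0+P_1 v_n,\lambda_n)=o(\|P_1 v_n\|)$ to the conclusion $P_1 v_n=0$. Assumption (B5) concerns exact critical points, not approximate ones, and in the degenerate situation that is the whole point of the paper (B3) allows $\ker(\lambda_0 A)\cap T_{u_0}^{\perp}\Gamma(u_0)\neq\{0\}$. In that case one can certainly have $c_n\to 0$, $c_n\neq 0$, with $\nabla_u F(u_0+c_n,\lambda_n)=o(|c_n|)$: take for instance $F(x,\lambda)=x^4/4$ on a one-dimensional slice, where (B5) and (B6) both hold yet $F'(c)=c^3=o(c)$. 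So neither (B5) nor (B6) rules out your approximate-critical-point sequence, and the vague ``Lyapunov--Schmidt reduction on this finite-dimensional slice combined with (B5)'' does not close the argument.

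What is missing is the observation that the \emph{exact} implicit-function correction on $\bH_1^{\perp}$ is identically zero: for any constant $c\in\bH_1$ one has $\nabla_u\Phi(\tilde u_0+c,\lambda)\in\bH_1$ (either by direct computation, or because $\bH_1=\bH^{SO(N)}$ and $\Phi$ is $SO(N)$-invariant), so $P_1^{\perp}\nabla_u\Phi(\tilde u_0+c,\lambda)=0$ and uniqueness in the implicit function theorem forces $P_1^{\perp}v_n=0$ exactly. Then $u_n$ is constant, the $\bH_1$-projected equation becomes exactly $\nabla_u F(u_n,\lambda_n)=0$, and (B5) applies directly. Once you insert this, your normalized-sequence and Taylor-expansion steps become superfluous.

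The paper reaches the same conclusion by a cleaner symmetry argument: it performs the Lyapunov--Schmidt reduction onto $\ker\nabla_u^2\Phi(\tilde u_0,\lambda_0)$, uses the $SO(N)$-equivariance of the implicit map $\omega$ to compute the isotropy $G_{(u_1,\omega(u_1,\lambda),\lambda)}=G_{u_1}$, and observes that if $\ker\subset\bH_1=\bH^{SO(N)}$ then every nearby solution is $SO(N)$-fixed, hence constant, contradicting (B5).
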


\begin{proof}
Fix $\lambda_0\in\bR$ and suppose that $(\tilde{u}_0, \lambda_0)$ is an accumulation point of nontrivial weak solutions of the system \eqref{eq:sphere1}, i.e. there exists a sequence of nontrivial solutions $(v_n, \lambda_n)$ converging to $(\tilde{u}_0, \lambda_0)$.

By the definition of the functional associated to the system, weak solutions of \eqref{eq:sphere1} are solutions of
\begin{equation}\label{eq:cale}
\nabla_u\Phi(u, \lambda)=0.
\end{equation}
This equation is equivalent to the system
\begin{equation}\label{eq:ker}
\nabla_{u_1}\Phi(u_1,u_2, \lambda)=0,
\end{equation}
\begin{equation}\label{eq:im}
\nabla_{u_2}\Phi(u_1,u_2, \lambda)=0,
\end{equation}
where $u=(u_1,u_2)\in \ker\nabla_u^2\Phi(\tilde{u}_0,\lambda_0) \oplus \im\nabla_u^2\Phi(\tilde{u}_0,\lambda_0)=\bH$.

Considering the latter equation, due to the fact that $\nabla_{u_2}^2\Phi(\tilde{u}_0,\lambda_0)$ is an isomorphism, we can use the equivariant implicit function theorem (see \cite{FRR}). This theorem implies the existence of an $SO(N)$-equivariant map
$$\omega\colon D_\varepsilon(\ker\nabla_u^2\Phi(\tilde{u}_0,\lambda_0))\times(\lambda_0-\varepsilon,\lambda_0+\varepsilon)\to \im\nabla_u^2\Phi(\tilde{u}_0,\lambda_0)$$ such that $(u_1,u_2,\lambda)=(u_1,\omega(u_1,\lambda),\lambda)$ is the only solution of \eqref{eq:im} at a neighbourhood  of $(\tilde{u}_0,\lambda_0)$.
Therefore the solutions of \eqref{eq:cale} in such a neighbourhood must be of the form $(u_1,\omega(u_1,\lambda),\lambda)$.

Consider the isotropy group of such a solution. Using the $G$-equivariance of $\omega$, we obtain $G_{(u_1, w(u_1, \lambda),\lambda)}=G_{u_1}\cap G_{w(u_1, \lambda)}\cap G_{\lambda}=G_{u_1}$. Therefore the existence of a sequence $(v_n, \lambda_n)$ converging to $(\tilde{u}_0, \lambda_0)$ implies that \begin{equation}\label{eq:nonempty}
\ker\nabla_u^2\Phi(\tilde{u}_0,\lambda_0)\cap\bigoplus_{k=2}^{\infty}\bH_k \neq \emptyset.
\end{equation}
Indeed, assume, by contrary, that $\ker\nabla_u^2\Phi(\tilde{u}_0,\lambda_0)\subset\bH_1$ and consider the group action \eqref{eq:action}, restricted to the subgroup $\{e\} \times SO(N)$, on the spaces $\bH_k$. Identifying $\{e\} \times SO(N)$ with $SO(N)$, from Lemma \ref{lem:irred} we have $\bH_1=\bH^{SO(N)}$. Therefore the above computations, with the assumption $\ker\nabla_u^2\Phi(\tilde{u}_0,\lambda_0)\subset\bH_1$, imply that  $SO(N)_{(v_n,\lambda_n)}=SO(N)$ for every $n$. As a consequence, $v_n\in \bH_1$, i.e. $(v_n,\lambda_n)$ are constant solutions corresponding to critical points of $F$. This is a contradiction with (B5), i.e. the fact that $\Gamma(u_0)\times \bR$ is isolated in the set of critical points of $F$.

To finish the proof, notice that the inequality \eqref{eq:nonempty} and Remark \ref{rem:linearequation} imply that there are $\beta_k\in \sigma(-\Delta; S^{N-1})\setminus\{0\}$ and $\alpha_j \in \sigma(A)$ such that $\frac{\beta_k-\lambda_0\alpha_j}{1+\beta_k}=0$, i.e. $\beta_k=\lambda_0\alpha_j$. This in particular proves that $\alpha_j \neq 0$ and finally $\lambda_0 = \frac{\beta_k}{\alpha_j}\in\Lambda$.
\end{proof}

\begin{Remark}\label{rem:loc}
Lemma \ref{lemma:warunekkonieczny} gives us a necessary condition for the so called local bifurcation. This bifurcation does not have to be a global one, i.e.  the bifurcating set does not have to be connected. \end{Remark}

Now we are in a position to prove the sufficient condition for the global bifurcation phenomenon of nontrivial solutions of the system \eqref{eq:sphere1}. Namely, we will show that for all $\lambda\in\Lambda$ the phenomenon occurs from the orbit $G(\tilde{u}_0) \times \{\lambda\}$. In other words, we will prove that for the system on a sphere, the necessary condition is also a sufficient one.

\begin{Theorem}\label{thm:sphere}
Consider the system \eqref{eq:sphere1} with the potential $F$ and $u_0$ satisfying the assumptions (B1)-(B6) and fix $\lambda_0 \in \Lambda$. Then
a global bifurcation of solutions of \eqref{eq:sphere1} occurs from the orbit $G(\tilde{u}_0) \times \{\lambda_0\}$.
\end{Theorem}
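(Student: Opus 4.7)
I would apply the equivariant Rabinowitz-type alternative (Theorem \ref{thm:RabAlt}) to the gradient map $\nabla_u\Phi(\cdot,\lambda)$ on a $G$-invariant tubular neighbourhood $\Omega=G\cdot B_\epsilon(\tilde u_0,\bW)$ of the orbit $G(\tilde u_0)$, with $\bW=T_{\tilde u_0}^{\perp}G(\tilde u_0)$. To trigger the alternative it suffices to produce $\lambda^-<\lambda_0<\lambda^+$ such that $\nabla_u\Phi(\cdot,\lambda^\pm)$ is $\Omega$-admissible and
\[
\degg(\nabla_u\Phi(\cdot,\lambda^-),\Omega)\neq\degg(\nabla_u\Phi(\cdot,\lambda^+),\Omega).
\]
Since the set $\Lambda$ is discrete, Lemma \ref{lemma:warunekkonieczny} lets me pick $\lambda^\pm$ so close to $\lambda_0$ that $[\lambda^-,\lambda^+]\cap\Lambda=\{\lambda_0\}$ and $\lambda^\pm\neq 0$; then $G(\tilde u_0)$ is isolated in $(\nabla_u\Phi(\cdot,\lambda^\pm))^{-1}(0)$, so on a sufficiently thin tube $\Omega$ the admissibility is automatic.

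\textbf{Reduction to the normal slice.} By (B4) and the definition of the action \eqref{eq:action}, the isotropy of $\tilde u_0$ is $H:=G_{\tilde u_0}=\{e\}\times SO(N)\cong SO(N)$. Since any conjugation $(\gamma,\alpha)(e,k)(\gamma,\alpha)^{-1}=(e,\alpha k\alpha^{-1})$ stays inside $\{e\}\times SO(N)$, the pair $(G,H)$ is admissible in the sense of \cite{PRS}. Consequently I may invoke the infinite-dimensional analogue of Corollary \ref{cor:differentfinitedegrees} indicated in the Remark following it, which reduces the desired inequality to the corresponding inequality of $H$-degrees of the restrictions
\[
\psi^{\pm}:=\Phi(\cdot,\lambda^{\pm})\bigr|_{\tilde u_0+B_\epsilon(\bW)}
\]
on the normal slice.

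\textbf{Splitting the normal slice.} Because $G(\tilde u_0)=\Gamma\tilde u_0$ lies inside the $SO(N)$-fixed subspace $\bH_1\cong\bR^p$, the slice decomposes $H$-equivariantly as
\[
\bW=\bW_0\oplus\bigoplus_{k\geq 2}\bH_k,\qquad \bW_0:=T_{u_0}^{\perp}\Gamma(u_0)\subset\bR^p.
\]
I would then apply the splitting lemma (Lemma \ref{thm:splitting}) at $\lambda_0$ to separate the (possibly degenerate) part from the non-degenerate part of the Hessian $Id-L_{\lambda_0 A}$ on $\bW$. The degenerate directions are contained in $\bW_0\oplus\bigoplus_{\beta_k/\alpha_j=\lambda_0}\bH_k^{(\alpha_j)}$ and the lemma expresses $\degh(\nabla\psi^{\pm},B_\epsilon(\bW))$ as an Euler-ring product of a factor coming from the degenerate block with a factor coming from the linear non-degenerate block. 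The degenerate $\bW_0$-factor is essentially the $\Gamma$-equivariant Brouwer degree of $\nabla_uF|_{\bW_0}(\cdot,\lambda^{\pm})$, which is nonzero by (B6); moreover, because $\lambda^-$ and $\lambda^+$ lie on the same connected component of $\bR\setminus\{0\}$ after shrinking, this factor is the \emph{same} for $\lambda^-$ and $\lambda^+$ by a straight-line homotopy through the set $\{\lambda:\lambda\alpha_j\neq\beta_k\text{ for relevant }j,k\}$.

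\textbf{Detecting the jump.} What remains is the linear non-degenerate factor. By Remark \ref{rem:linearequation}, as $\lambda$ crosses $\lambda_0=\beta_k/\alpha_j$, the family of eigenvalues $(\beta_k-\lambda\alpha_j)/(1+\beta_k)$ passes through zero and changes sign on a direct summand isomorphic, as an $H$-representation, to a copy of $\bV_{-\Delta}(\beta_k)$ (tensored by the $\alpha_j$-eigenspace of $A$). By Lemma \ref{lem:irred} this $SO(N)$-representation is nontrivial and irreducible, so the corresponding factor of the $H$-degree changes by the class $\chi_H(H/K^+)$ of a proper isotropy $K\subsetneq H$, which is a non-invertible element of the Euler ring $U(H)$. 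Multiplying the unchanged, nonzero $\bW_0$-factor by two different elements of $U(H)$ yields two different elements of $U(H)$, giving \eqref{eq:finite1}. Corollary \ref{cor:differentfinitedegrees} then upgrades this to \eqref{eq:finite2}, and Theorem \ref{thm:RabAlt} delivers the global bifurcation at $\lambda_0$. The main obstacle I expect is precisely this last bookkeeping step — ensuring that the change picked up on a nontrivial $SO(N)$-representation really survives multiplication by the $\bW_0$-factor in $U(H)$ rather than being cancelled by a coincidence of orbit types; the admissibility of $(G,H)$ together with the fact that the $SO(N)$-action on $\bW_0$ is trivial (so the $\bW_0$-factor is concentrated on the $H$-fixed part of $U(H)$) is what makes this safe.
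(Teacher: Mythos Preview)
Your overall strategy coincides with the paper's: pick $\lambda^\pm$ on either side of $\lambda_0$ avoiding $\Lambda$, pass to the slice via the admissibility of $(\Gamma\times SO(N),\{e\}\times SO(N))$, split off the degenerate part with Lemma~\ref{thm:splitting}, and detect a change in the linear factor using that $\bV_{-\Delta}(\beta_k)$ is a nontrivial $SO(N)$-representation together with (B6). However, the splitting step as you describe it is misapplied, and this matters.

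The splitting lemma is applied to each functional $\psi^{\pm}$ at its isolated critical point; the relevant Hessian is therefore $Id-L_{\lambda^{\pm}A}$, not $Id-L_{\lambda_0 A}$. Since $\lambda^{\pm}\notin\Lambda$, the kernel $\cN$ of $Id-L_{\lambda^{\pm}A}$ on $\bW$ lies \emph{entirely} in $\bW_0\subset\bH_1$; the summands $\bH_k^{(\alpha_j)}$ with $\beta_k/\alpha_j=\lambda_0$ belong to the \emph{image} $\cR$, not to the degenerate block. This is precisely what makes your next sentence true: because $\cN\subset\bW^H$, the degenerate factor really is a Brouwer degree times $\bI$. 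With your stated decomposition (kernel at $\lambda_0$), the degenerate block would contain a nontrivial $SO(N)$-representation and that identification would fail; moreover the splitting lemma is not even available at $\lambda_0$, since the orbit need not be isolated there. Once you place the $\bH_k^{(\alpha_j)}$ summand in $\cR$, the ``jump'' you want is exactly the extra factor $\nabla_H\text{-}\deg(-Id,B(\cV(\lambda_0)))$ picked up by $(\cL_+)|_{\cR}$ versus $(\cL_-)|_{\cR}$, as in the paper.

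A second, smaller point: you do not need the two Brouwer degrees on $\bW_0$ to be equal, and your justification (``homotopy through $\{\lambda:\lambda\alpha_j\neq\beta_k\}$'') is irrelevant to the finite-dimensional map $\nabla_uF|_{\bW_0}$. The paper only uses that each of these integers is nonzero, from (B6): one side of the comparison is then $b\cdot\bI$ with $b\neq 0$, while the other is $a\cdot\nabla_H\text{-}\deg(-Id,B(\cV(\lambda_0)))$ with $a\neq 0$, and Remark~\ref{rem:nontriv} says this is never an integer multiple of $\bI$. That already forces the inequality, so your worry about cancellation disappears without assuming the Brouwer degrees coincide.
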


\begin{proof}
Fix $\lambda_0 \in \Lambda$ and choose $\varepsilon>0$  such that $\Lambda\cap[\lambda_0-\varepsilon,\lambda_0+\varepsilon]=\{\lambda_0\}$.
From the definition of $\Lambda$ such a choice is always possible.
Since $\lambda_0 \pm \varepsilon \notin \Lambda,$ Lemma \ref{lemma:warunekkonieczny} implies that $G(\tilde{u}_0) \subset \bH$ is an isolated critical orbit of the $G$-invariant functionals $\Phi(\cdot, \lambda_0 \pm \varepsilon) \colon \bH \to \bR.$ Therefore there exists an open, bounded and $G$-invariant subset $\Omega\subset \bH$  such that $(\nabla_u \Phi (\cdot, \lambda_0\pm\varepsilon))^{-1}(0)\cap cl(\Omega)=G(\tilde{u}_0).$

Due to the global bifurcation theorem (see Theorem \ref{thm:RabAlt}), and taking into consideration the necessary condition of bifurcation (Lemma \ref{lemma:warunekkonieczny}), to prove the assertion it is enough to show that
\begin{equation}\label{eq:DegreeS}
\nabla_{G}\textrm{-}\mathrm{deg}(\nabla_u\Phi(\cdot,\lambda_0-\varepsilon), \Omega) \neq
\nabla_{G}\textrm{-}\mathrm{deg}(\nabla_u\Phi(\cdot,\lambda_0+\varepsilon), \Omega).
\end{equation}
From the definition of the degree this inequality is equivalent with
\begin{equation}\label{eq:DegreeOrtS}
\nabla_{G}\textrm{-}\mathrm{deg}(\nabla_u\Phi|_{\bH^n}(\cdot,\lambda_0-\varepsilon),  \Omega\cap\bH^n) \neq
\nabla_{G}\textrm{-}\mathrm{deg}(\nabla_u\Phi|_{\bH^n}(\cdot,\lambda_0+\varepsilon), \Omega\cap\bH^n),
\end{equation}
where $n$ is sufficiently large. In the following we assume that this $n$ is fixed.

Denote by $H$ the isotropy group of $\tilde{u}_0$ and note that by the assumption (B4) we have $H=\{e\}\times SO(N)$.
Consider an $H$-representation $\bW=T_{\tilde{u}_0}^{\perp}G(\tilde{u}_0)$ and an $H$-invariant functional $\Psi=\Phi|_{\bW}$.
By Corollary \ref{cor:differentfinitedegrees} (note that the pair $(G,H)=(\Gamma \times SO(N),\{e\}\times SO(N))$ is admissible, see Lemma 2.8 of \cite{GolKluSte}) and the excision property of the degree, to prove \eqref{eq:DegreeOrtS} it is enough to show that
$$\nabla_{H}\textrm{-}\mathrm{deg}(\nabla_u\Psi|_{\bW^n}(\cdot,\lambda_0-\varepsilon),  B_{\delta}(\tilde{u}_0,\bW^n)) \neq
\nabla_{H}\textrm{-}\mathrm{deg}(\nabla_u\Psi|_{\bW^n}(\cdot,\lambda_0+\varepsilon), B_{\delta}(\tilde{u}_0,\bW^n)),$$
where $\bW^n = \bW \cap \bH^n$ and $B_{\delta}(\tilde{u}_0,\bW^n)\subset \Omega\cap\bW^n$ is such that $\delta>0$ is taken from the slice theorem (see Theorem \ref{thm:slice}).

Instead of the functional $\Psi|_{\bW^n}$ it is more convenient to consider the shifted functional $\Pi^n(u, \lambda)=\Psi|_{\bW^n}(u+\tilde{u}_0, \lambda)$. Denote by $\Pi_{\pm}(u)=\Pi^n(u, \lambda_0 \pm \varepsilon)$ and note that (by (B5)) $0$~is an isolated critical point of $\Pi_{\pm}$. Moreover
$$\nabla_{H}\textrm{-}\mathrm{deg}(\nabla_u\Psi|_{\bW^n}(\cdot,\lambda_0\pm\varepsilon),  B_{\delta}(\tilde{u}_0,\bW^n))=
\nabla_{H}\textrm{-}\mathrm{deg}(\nabla\Pi_{\pm},  B_{\delta}(\bW^n)).$$

Put $\cL_{\pm}=\nabla^2\Pi_{\pm}(0)\colon \bW^n \to \bW^n$. Since $\lambda_0 \pm \varepsilon \notin \Lambda$, we obtain $\ker(\cL_{-})=\ker(\cL_{+})$ and, by the self-adjointness, $\im(\cL_{-})=\im(\cL_{+})$. We denote the kernel and image of $\cL_{\pm}$ by  $\cN$ and $\cR$, respectively.

The computation of the degree of $\nabla \Pi_{\pm}$ can be reduced to the computation of the degrees of some maps defined on $B_{\delta}(\cN)$ and $B_{\delta}(\cR)$. More precisely, using the splitting lemma (see Theorem \ref{thm:splitting}) we obtain $\epsilon>0$ and $H$-equivariant homotopies $\nabla_u \cH_{\pm} \colon  (B_\epsilon(\cN)\times B_\epsilon(\cR))\times[0,1]\to\bW^n$ connecting $\nabla \Pi_{\pm}$ with a product mapping $(\nabla\varphi_{\pm},((\cL_{\pm})|_{\cR}) ),$ where  $\nabla\varphi_{\pm} \colon B_{\epsilon}(\cN) \to \cN$ are some $H$-equivariant maps. Without loss of generality we can assume that $\epsilon=\delta$.

Therefore by the homotopy invariance property of the degree we obtain
$$\nabla_{H}\textrm{-}\mathrm{deg}(\nabla\Pi_{\pm}, B_{\delta}(\bW^n)) =\nabla_{H}\textrm{-}\mathrm{deg}((\nabla\varphi_{\pm},(\cL_{\pm})|_{\cR} ), B_{\delta}(\cN) \times B_{\delta}(\cR)).$$

Note that since $\cL_{\pm}$ are isomorphisms, the $(B_{\delta}(\cN) \times B_{\delta}(\cR))$-admissibility of the product maps described above implies the $B_{\delta}(\cN)$-admissibility of $\nabla \varphi_{\pm}$. Therefore we can apply the product formula (see Lemma \ref{lem:prod}) of the degree and obtain
\begin{equation*}
\begin{split}&\nabla_{H}\textrm{-}\mathrm{deg}((\nabla\varphi_{\pm},(\cL_{\pm})|_{\cR}), B_{\delta}(\cN) \times B_{\delta}(\cR))=\\&=\nabla_{H}\textrm{-}\mathrm{deg}(\nabla\varphi_{\pm},B_\delta(\cN)) \star \nabla_{H}\textrm{-}\mathrm{deg}((\cL_{\pm})|_{\cR},B_\delta(\cR)).
\end{split}
\end{equation*}

Denote by $\cT$ the space normal to the orbit $\Gamma(\tilde{u}_0)$ in $\bH_1$ (i.e. $\cT=T_{\tilde{u}_0}^{\perp} \Gamma(\tilde{u}_0)$ where the complement is taken in $\bH_1$).
Note that, since $\lambda_0\pm\varepsilon \notin \Lambda$, we have $\cN\subset \bH_1$ and $\cR = (\cT \ominus \cN) \oplus \bigoplus_{k=2}^n \bH_k$.
Put $\widetilde{\cR}=\cT\ominus \cN$  and $\cR_n=\bigoplus_{k=2}^n \bH_k.$

Using again the fact that $\cL_{\pm}$ are isomorphisms we can apply once more the product formula, obtaining
 $$\ds \nabla_{H}\textrm{-}\mathrm{deg}((\cL_{\pm})|_{\cR},B_\delta(\cR))=
\nabla_{H}\textrm{-}\mathrm{deg}((\cL_{\pm})|_{\widetilde\cR},B_\delta(\widetilde\cR)) \star \nabla_{H}\textrm{-}\mathrm{deg}((\cL_{\pm})|_{\cR_n},B_\delta(\cR_n))$$
and consequently
\begin{equation}\label{eq:Pi}
\begin{split} &\nabla_{H}\textrm{-}\mathrm{deg}(\nabla\Pi_{\pm}, B_{\delta}(\bW^n))=\\
&=\nabla_{H}\textrm{-}\mathrm{deg}(\nabla\varphi_{\pm},B_\delta(\cN)) \star
\nabla_{H}\textrm{-}\mathrm{deg}((\cL_{\pm})|_{\widetilde\cR},B_\delta(\widetilde\cR)) \star \nabla_{H}\textrm{-}\mathrm{deg}((\cL_{\pm})|_{\cR_n},B_\delta(\cR_n))\\
&=\nabla_{H}\textrm{-}\mathrm{deg}((\nabla\varphi_{\pm},(\cL_{\pm})|_{\widetilde\cR}) ,B_\delta(\cN) \times B_\delta(\widetilde\cR) ) \star \nabla_{H}\textrm{-}\mathrm{deg}((\cL_{\pm})|_{\cR_n},B_\delta(\cR_n)).
\end{split}
\end{equation}

Let us consider the first factor in the above product. It is easy to check that the restrictions of the homotopies $\cH_{\pm}$ (given by the splitting lemma, see also Remark \ref{rem:homotopia}) to $\cT=\cN \oplus \widetilde{\cR}$ connect $(\nabla \varphi_{\pm}, (\cL_{\pm})|_{\widetilde\cR})$ with $\nabla \Psi|_{\cN \oplus\widetilde{\cR}}$.
Moreover, from the definitions of $\Psi$ and $\cT$ we have $\nabla \Psi|_{\cT}=\nabla F|_{\cT}$.
From the above, since $\cN \oplus \widetilde{\cR} \subset \bW^H$, using equation \eqref{eq:Brouwer} we obtain
$$\nabla_{H}\textrm{-}\mathrm{deg}((\nabla\varphi_{\pm},(\cL_{\pm})|_{\widetilde\cR}) ,B_\delta(\cN) \times B_\delta(\widetilde\cR) )= \deg_B(\nabla F|_{\cT}( \cdot, \lambda_0 \pm \varepsilon), B_{\delta}(\cT),0)\cdot \bI,$$ where $\bI$ is the identity in $U(H)$.

Consider now the latter factor in the last product in \eqref{eq:Pi}. Denote by $\cW(\lambda), \cV(\lambda)$ the negative and zero eigenspaces of $(Id-L_{\lambda A})|_{\cR_n}$ (i.e. the direct sum of the eigenspaces of $(Id-L_{\lambda A})|_{\cR_n}$, corresponding to negative and zero eigenvalues, respectively). The description of these spaces can be obtained via Remark \ref{rem:linearequation}. Moreover, from this description it follows that for $n$ sufficiently large $\cW(\lambda)$ and $\cV(\lambda)$ are the negative and zero eigenspaces of $(Id-L_{\lambda A})|_{\bH_1^{\perp}}$. Without loss of generality we can assume that $n$ satisfies this condition. Therefore, from Remark \ref{rem:linearequation}, we have
$$\cW(\lambda)=
\bigoplus_{\alpha_j\in\sigma( A)} \ \bigoplus_{\substack{\beta_k \in \sigma(-\Delta;S^{N-1})\setminus\{0\}\\
\beta_k<\lambda  \alpha_j}} \bV_{-\Delta}(\beta_k)^{\mu_{ A}(\alpha_{j})}$$
and
\begin{equation*}
\cV(\lambda)=\bigoplus_{\alpha_j \in \sigma(A)} \bigoplus_{\substack{\beta_k \in \sigma(-\Delta;S^{N-1})\setminus\{0\}\\ \beta_k =\lambda \alpha_j}} \bV_{-\Delta}(\beta_k)^{\mu_A(\alpha_j)}.
\end{equation*}
Here $\mu_A(\alpha_j)$ denotes the multiplicity of $\alpha_j$ as an eigenvalue of $A$. Moreover $\bV_{-\Delta}(\beta_k)^{\mu_A(\alpha_j)}$ is formally understood as $\mathrm{span}\{h \cdot f\colon h \in \bV_{-\Delta}(\beta_k), f \in \bV_A(\alpha_j)\}$, see also \cite{GolKluSte}. However, since in our computations only the dimensions of these spaces are important, we use the fact that this space is isomorphic to the direct sum of $\mu_A(\alpha_j)$ copies of $\bV_{-\Delta}(\beta_k).$

Consider $\lambda_0 > 0 $ and assume that $\varepsilon$ is such that $\lambda_0 -\varepsilon >0.$

A standard computation, see for example \cite{GolKlu}, shows that
$$\nabla_{H}\textrm{-}\mathrm{deg}((\cL_{+})|_{\cR_n},B_\delta(\cR_n))
=\nabla_{H}\textrm{-}\mathrm{deg}(-Id,B_\delta(\cW(\lambda_0 - \varepsilon))) \star
\nabla_{H}\textrm{-}\mathrm{deg}(-Id,B_\delta(\cV(\lambda_0)))
$$
and
$$\nabla_{H}\textrm{-}\mathrm{deg}((\cL_{-})|_{\cR_n},B_\delta(\cR_n))
= \nabla_{H}\textrm{-}\mathrm{deg}(-Id,B_\delta(\cW(\lambda_0 - \varepsilon))).$$
Moreover, it is known that the above degrees of $-Id$ are invertible (see Theorem 2.1  of \cite{GolRyb2011}).

Summing up, using equation \eqref{eq:Pi} and the above computations, we obtain that to prove \eqref{eq:DegreeB} we have to show
$$
\deg_B(\nabla F|_{\cT}( \cdot, \lambda_0 + \varepsilon), B_{\delta}(\cT),0)\cdot \nabla_{H}\textrm{-}\mathrm{deg}(-Id,B_\delta(\cV(\lambda_0))) \neq\deg_B(\nabla F|_{\cT}( \cdot, \lambda_0 -\varepsilon), B_{\delta}(\cT),0)\cdot \bI.
$$

Since $\cV(\lambda_0)$ is a nontrivial $SO(N)$-representation (see Lemma \ref{lem:irred}) and therefore also a nontrivial $H$-representation, by Remark \ref{rem:nontriv},
$\nabla_{H}\textrm{-}\mathrm{deg}(-Id,B_\epsilon(\cV(\lambda_0)))\neq a \cdot \bI$
for any $a\in\bZ$. Using the assumption (B6) we obtain the assertion in this case.

Similarly, for $\lambda_0 <0$, assuming $\lambda_0 + \varepsilon <0$  we obtain
$$\nabla_{H}\textrm{-}\mathrm{deg}((\cL_{+})|_{\cR_n},B_\delta(\cR_n))
=\nabla_{H}\textrm{-}\mathrm{deg}(-Id,B_\delta(\cW(\lambda_0 + \varepsilon)))
$$
and
$$
\nabla_{H}\textrm{-}\mathrm{deg}((\cL_{-})|_{\cR_n},B_\delta(\cR_n))
= \nabla_{H}\textrm{-}\mathrm{deg}(-Id,B_\delta(\cW(\lambda_0 + \varepsilon)))
\star
\nabla_{H}\textrm{-}\mathrm{deg}(-Id,B_\delta(\cV(\lambda_0))).
$$
The rest of the proof is analogous to the previous case.
\end{proof}

\subsection{Global bifurcations of solutions of the system on a ball}\label{ssec:ball}

Now we turn our attention to the system defined on a ball:
\begin{equation}\label{eq:ball1}
\left\{
 \begin{array}{rclcl}   -\triangle u & =& \nabla_u F(u,\lambda )   & \text{ in   } & B^N \\
                   \frac{\partial u}{\partial \nu} & =  & 0 & \text{ on    } & S^{N-1}
\end{array}\right.
\end{equation}
under the assumptions (B1)-(B6). For the Laplacian considered on a ball there exist radial eigenfunctions different than the constant ones. Consequently, the method used to prove the necessary condition for a bifurcation in the case of the system considered in Subsection \ref{subsec:sphere} cannot be applied to find a counterpart here. Therefore we will study bifurcations of solutions of \eqref{eq:ball1} in a different way.
In particular for some $\lambda\in \bR$ we are able to prove only a local (not global) bifurcation (we say that a local bifurcation occurs from the orbit $G(\tilde{u}_0) \times \{\lambda_0\}$ if $(\tilde{u}_0,\lambda_0)$ is an accumulation point of nontrivial solutions of \eqref{eq:ball1}).

\begin{Theorem}\label{thm:ball}
Consider the system \eqref{eq:ball1} with the potential $F$ and $u_0$ satisfying the assumptions (B1)-(B6). Let $\alpha_0 \in \sigma(A) \setminus\{0\}$ and $\beta_0 \in \sigma(-\Delta;B^N)\setminus \{0\}$ be such that $\dim \bV_{-\Delta}(\beta_0)>1$. Then for every $\varepsilon>0$, such that $0\notin(\frac{\beta_0}{\alpha_0}-\varepsilon,\frac{\beta_0}{\alpha_0}+\varepsilon)$, at least one of the following statements holds:
\begin{enumerate}[(i)]
\item  a local bifurcation of solutions of \eqref{eq:ball1} occurs from the orbit $G(\tilde{u}_0) \times \{\lambda\}$ for every $\lambda\in(\frac{\beta_0}{\alpha_0}-\varepsilon,\frac{\beta_0}{\alpha_0})$ or for every $\lambda\in(\frac{\beta_0}{\alpha_0},\frac{\beta_0}{\alpha_0}+\varepsilon)$,
\item a global bifurcation of solutions of \eqref{eq:ball1} occurs from the orbit $G(\tilde{u}_0) \times \{\widehat\lambda\}$ for some $\widehat\lambda \in (\frac{\beta_0}{\alpha_0}-\varepsilon,\frac{\beta_0}{\alpha_0}+\varepsilon)$.
\end{enumerate}
\end{Theorem}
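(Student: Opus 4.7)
Set $\lambda_0 := \beta_0/\alpha_0$ and
\[
\Lambda_B := \{\beta_k/\alpha_j : \alpha_j \in \sigma(A)\setminus\{0\},\ \beta_k \in \sigma(-\Delta;B^N)\setminus\{0\}\};
\]
since $\sigma(-\Delta;B^N)$ is a discrete unbounded sequence and $\sigma(A)$ is finite, $\Lambda_B$ is a discrete closed subset of $\bR$. Fix $\eta\in(0,\varepsilon]$ with $\Lambda_B\cap(\lambda_0-\eta,\lambda_0+\eta)=\{\lambda_0\}$. My plan is to establish conclusion (ii) by splitting into two cases according to whether local bifurcation accumulates densely near $\lambda_0$ on one side.

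\textbf{Case A (a continuum yields (ii) directly).} Suppose that every $\lambda\in(\lambda_0-\eta,\lambda_0)$ is a local bifurcation point, i.e.\ $(\tilde u_0,\lambda)$ is an accumulation point of nontrivial solutions of $\nabla_u\Phi=0$ (the right-hand analogue is symmetric). Then the closure $\cZ$ of the set of nontrivial solutions contains $\{\tilde u_0\}\times(\lambda_0-\eta,\lambda_0)$, and being closed in $\bH\times\bR$ it contains $\{\tilde u_0\}\times[\lambda_0-\eta,\lambda_0]$. This closed interval is connected and contains $(\tilde u_0,\lambda_0)$, so it lies entirely inside the connected component $\cC(\tilde u_0,\lambda_0)$, which therefore meets $G(\tilde u_0)\times(\bR\setminus\{\lambda_0\})$---precisely conclusion~(ii).

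\textbf{Case B (sphere-like argument).} Otherwise, in each of the narrow half-intervals there is a point that is not a local bifurcation point; since the set $B\subset\bR$ of local bifurcation points is closed (it is the projection to $\bR$ of $\cZ\cap(\{\tilde u_0\}\times\bR)$), its complement is open, and $\Lambda_B$ is discrete, so one can select $\lambda^-\in(\lambda_0-\eta,\lambda_0)\setminus(B\cup\Lambda_B)$ and $\lambda^+\in(\lambda_0,\lambda_0+\eta)\setminus(B\cup\Lambda_B)$. At such $\lambda^\pm$ the operator $(\mathrm{Id}-L_{\lambda^\pm A})|_{\bH_1^\perp}$ is an isomorphism and $G(\tilde u_0)$ is isolated in $(\nabla_u\Phi(\cdot,\lambda^\pm))^{-1}(0)$, so I fix a bounded $G$-invariant open $\Omega\subset\bH$ with $cl(\Omega)\cap(\nabla_u\Phi(\cdot,\lambda^\pm))^{-1}(0)=G(\tilde u_0)$. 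I then repeat the computation in the proof of Theorem~\ref{thm:sphere} verbatim: reduce to a finite-dimensional approximation $\bH^n$ for $n$ large, apply Corollary~\ref{cor:differentfinitedegrees} (the pair $(G,H)=(\Gamma\times SO(N),\{e\}\times SO(N))$ is admissible) to pass to the normal slice $\bW=T_{\tilde u_0}^\perp G(\tilde u_0)$, and use the splitting lemma (Theorem~\ref{thm:splitting}) together with the product formula (Lemma~\ref{lem:prod}). Because $\lambda_0$ is the unique element of $\Lambda_B$ in $[\lambda^-,\lambda^+]$, the negative eigenspaces $\cW(\lambda^\pm)$ of $(\mathrm{Id}-L_{\lambda^\pm A})|_{\bH_1^\perp}$ differ exactly by $\bV_{-\Delta}(\beta_0)^{\mu_A(\alpha_0)}$---the only crossing as $\lambda$ traverses $\lambda_0$---so the equivariant degrees of $\nabla_u\Phi(\cdot,\lambda^\pm)$ on $\Omega$ differ by the factor $\degh(-\mathrm{Id},B_\delta(\bV_{-\Delta}(\beta_0)^{\mu_A(\alpha_0)}))$. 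By Lemma~\ref{lem:irred}(2) and the hypothesis $\dim\bV_{-\Delta}(\beta_0)>1$, the representation $\bV_{-\Delta}(\beta_0)$ is nontrivial as an $SO(N)$-representation, hence this factor is not a scalar multiple of $\bI\in U(H)$ (Remark~\ref{rem:nontriv}); combined with the nonvanishing Brouwer-degree factor provided by (B6) this forces the two degrees to disagree. Theorem~\ref{thm:RabAlt} then produces a global bifurcation at some $\widehat\lambda\in(\lambda^-,\lambda^+)\subset(\lambda_0-\varepsilon,\lambda_0+\varepsilon)$, giving (ii).

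\textbf{Main obstacle.} Compared with the sphere case, the chief technical point is that the necessary condition of Lemma~\ref{lemma:warunekkonieczny} has no analogue on the ball, since nonconstant radial eigenmodes of $-\Delta$ on $B^N$ may carry local bifurcations at points outside~$\Lambda_B$. The case split above is exactly what circumvents this: either local bifurcation fills an entire half-interval near $\lambda_0$ and yields (ii) by a direct connectedness argument, or one selects $\lambda^\pm$ close enough to $\lambda_0$ for the sphere-case computation to run with $\lambda_0$ as the unique crossing. The hypothesis $\dim\bV_{-\Delta}(\beta_0)>1$ is essential: it is precisely what renders the decisive crossing at $\lambda_0$ nonscalar in $U(H)$.
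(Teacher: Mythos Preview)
Your argument is correct, and in fact proves slightly more than the paper does. The paper's proof is the straightforward contrapositive: assume (i) fails, pick $\lambda_\pm$ on either side of $\lambda_0$ at which $G(\tilde u_0)$ is isolated, and run the splitting--lemma/product--formula computation (exactly as in Theorem~\ref{thm:sphere}) to show the two equivariant degrees differ, whence Theorem~\ref{thm:RabAlt} gives (ii). Your Case~B is this same computation, with the harmless cosmetic change of first shrinking to an interval of radius $\eta\le\varepsilon$ containing only the single crossing $\lambda_0$; the paper instead keeps the full $\varepsilon$-interval and packages all crossings into $\cV_{(\lambda_-,\lambda_+)}$.

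What is genuinely new in your write-up is Case~A: you observe that if every $\lambda$ in a half-interval is a local bifurcation point, then the arc $\{\tilde u_0\}\times[\lambda_0-\eta,\lambda_0]$ lies in $\cZ=cl\{\text{nontrivial solutions}\}$, is connected, and hence sits inside the component $\cC(\tilde u_0,\lambda_0)$, which therefore meets $G(\tilde u_0)\times(\bR\setminus\{\lambda_0\})$. By the letter of Definition~\ref{def:globbif} this is a global bifurcation at $\hat\lambda=\lambda_0$, so you conclude (ii) unconditionally. The paper does not make this observation and leaves the result as the disjunction (i)~or~(ii). Your strengthening is formally valid, though one should note that the ``global bifurcation'' produced in Case~A may be rather degenerate: the continuum $\cC$ is only guaranteed to contain a segment of the trivial branch (as limit points of nontrivial solutions), not an actual branch of nontrivial solutions.

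One small imprecision in Case~B: the crossing at $\lambda_0$ need not consist solely of $\bV_{-\Delta}(\beta_0)^{\mu_A(\alpha_0)}$, since several pairs $(\alpha_j,\beta_k)$ may satisfy $\beta_k/\alpha_j=\lambda_0$. The correct crossing space is $\bigoplus_{\beta_k=\lambda_0\alpha_j}\bV_{-\Delta}(\beta_k)^{\mu_A(\alpha_j)}$, as in the paper's $\cV_{(\lambda_-,\lambda_+)}$. This does not affect your conclusion, because $\bV_{-\Delta}(\beta_0)$ is still a summand and hence the representation is nontrivial.
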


\begin{proof}
Fix $\varepsilon>0$ such that $0\notin(\frac{\beta_0}{\alpha_0}-\varepsilon,\frac{\beta_0}{\alpha_0}+\varepsilon)$ and assume that the first statement does not hold. Then there exist $\lambda_{\pm}$ such that  $$\frac{\beta_0}{\alpha_0}-\varepsilon<\lambda_-<\frac{\beta_0}{\alpha_0}<\lambda_+< \frac{\beta_0}{\alpha_0}+\varepsilon$$
and $G(\tilde{u}_0)$ is an isolated critical orbit of $\Phi(\cdot, \lambda_{\pm}) \colon \bH \to \bR$.
Therefore there exists $\Omega\subset \bH$ being an open, bounded and $G$-invariant subset such that $(\nabla_u \Phi (\cdot, \lambda_{\pm}))^{-1}(0)\cap cl(\Omega)=G(\tilde{u}_0)$. Hence the degrees $\nabla_{G}\textrm{-}\mathrm{deg}(\nabla_u\Phi(\cdot,\lambda_{\pm}), \Omega)$ are well-defined and to prove the assertion we will apply the Rabinowitz alternative given in Theorem \ref{thm:RabAlt}. To this end we will show that
\begin{equation}\label{eq:DegreeB}
\nabla_{G}\textrm{-}\mathrm{deg}(\nabla_u\Phi(\cdot,\lambda_-), \Omega) \neq
\nabla_{G}\textrm{-}\mathrm{deg}(\nabla_u\Phi(\cdot,\lambda_+), \Omega).
\end{equation}

To prove \eqref{eq:DegreeB} consider an orthogonal $H$-representation $\bW=T_{\tilde{u}_0}^{\perp}G(\tilde{u}_0)$ and an $H$-invariant functional $\Psi=\Phi|_{\bW\times \bR}$, where $H=\{e\}\times SO(N)$ is the isotropy group of $\tilde{u}_0$.
Then, as in the proof of Theorem \ref{thm:sphere}, we obtain that \eqref{eq:DegreeB} is equivalent with
\begin{equation}\label{eq:DegreeBrest}
\nabla_{H}\textrm{-}\mathrm{deg}(\nabla_u\Psi|_{\bW^n}(\cdot,\lambda_-),B_{\delta}(\tilde{u}_0,\bW^n))
\neq
\nabla_{H}\textrm{-}\mathrm{deg}(\nabla_u\Psi|_{\bW^n}(\cdot,\lambda_+), B_{\delta}(\tilde{u}_0,\bW^n)),
\end{equation}
where $\bW^n = \bW \cap \bH^n$. Considering the shifting of the critical point to the origin we get
$$\nabla_{H}\textrm{-}\mathrm{deg}(\nabla_u\Psi|_{\bW^n}(\cdot,\lambda_{\pm}),B_{\delta}(\tilde{u}_0,\bW^n))=
\nabla_{H}\textrm{-}\mathrm{deg}(\nabla\Pi_{\pm}, B_{\delta}(\bW^n)),$$
where $\Pi_{\pm}(u)=\Psi|_{\bW^n}(u+\tilde{u}_0,\lambda_{\pm})$.

It is easy to observe that from the assumptions it follows that $\lambda_+$ and $\lambda_-$ are of the same sign. Suppose that $\lambda_+ >\lambda_->0$.
Reasoning as in the proof of Theorem \ref{thm:sphere}, we can obtain formulae for $\nabla_{H}\textrm{-}\mathrm{deg}(\nabla\Pi_{\pm}, B_{\delta}(\bW^n))$. More precisely, for $\lambda\in\bR$ let us denote
$$\cW(\lambda)=
\bigoplus_{\alpha_j\in\sigma( A)} \ \bigoplus_{\substack{\beta_k \in \sigma(-\Delta;B^{N})\setminus\{0\}\\\beta_k<\lambda  \alpha_j}} \bV_{-\Delta}(\beta_k)^{\mu_{ A}(\alpha_{j})}.
$$
Moreover, put
\[
\Lambda = \bigcup_{\alpha_j \in \sigma(A) \setminus\{0\}} \bigcup_{\beta_k \in \sigma(-\Delta;B^N)\setminus \{0\}} \left\{\frac{\beta_k}{\alpha_j}\right\}
\]
and $$
\cV_{(\lambda_-,\lambda_+)}=\bigoplus_{\lambda\in\Lambda \cap(\lambda_-,\lambda_+)} \bigoplus_{\alpha_j \in \sigma(A)} \bigoplus_{\substack{\beta_k \in \sigma(-\Delta;B^{N})\setminus\{0\}\\ \beta_k =\lambda \alpha_j}} \bV_{-\Delta}(\beta_k)^{\mu_A(\alpha_j)}.
$$
Then, applying the splitting lemma with the homotopy invariance property and the product formula, we obtain
\begin{multline*}
\nabla_{H}\textrm{-}\mathrm{deg}(\nabla\Pi_-,B_{\delta}(\bW^n))
=\deg_B(\nabla F|_{T_{u_0}^{\perp}\Gamma(u_0)}(\cdot,\lambda_-), B_{\epsilon}(T_{u_0}^{\perp}\Gamma(u_0)),0)\cdot\\
\cdot \nabla_{H}\textrm{-}\mathrm{deg}(-Id,B_\epsilon(\cW(\lambda_-)))
\end{multline*}
and
\begin{multline*}
\nabla_{H}\textrm{-}\mathrm{deg}(\nabla\Pi_+, B_{\delta}(\bW^n))
= \deg_B(\nabla F|_{T_{u_0}^{\perp}\Gamma(u_0)}(\cdot,\lambda_+), B_{\epsilon}(T_{u_0}^{\perp}\Gamma(u_0)),0)\cdot \\
\cdot \nabla_{H}\textrm{-}\mathrm{deg}(-Id,B_\epsilon(\cW(\lambda_-)))
 \star \nabla_{H}\textrm{-}\mathrm{deg}(-Id,B_\epsilon(\cV_{(\lambda_-,\lambda_+)})).
\end{multline*}

Hence, by the invertibility of $\nabla_{H}\textrm{-}\mathrm{deg}(-Id,B_\epsilon(\cW(\lambda_-)))$, see Theorem 2.1 of \cite{GolRyb2011}, we can reduce \eqref{eq:DegreeBrest} to
\begin{multline*}
\deg_B(\nabla F|_{T_{u_0}^{\perp}\Gamma(u_0)}(\cdot,\lambda_-), B_{\epsilon}(T_{u_0}^{\perp}\Gamma(u_0)),0) \cdot \bI \neq
\\ \neq
\deg_B(\nabla F|_{T_{u_0}^{\perp}\Gamma(u_0)}(\cdot,\lambda_+), B_{\epsilon}(T_{u_0}^{\perp}\Gamma(u_0)),0) \cdot
 \nabla_{H}\textrm{-}\mathrm{deg}(-Id,B_\epsilon(\cV_{(\lambda_-,\lambda_+)})).
\end{multline*}

To finish the proof note that $\bV_{-\Delta}(\beta_0)$ is a nontrivial $SO(N)$-representation (by Lemma \ref{lem:irred}). Therefore, since  $\bV_{-\Delta}(\beta_0)\subset \cV_{(\lambda_-,\lambda_+)}$, the space $\cV_{(\lambda_-,\lambda_+)}$ is also a nontrivial $SO(N)$-representation.
Hence, by Remark \ref{rem:nontriv},
$\nabla_{H}\textrm{-}\mathrm{deg}(-Id,B_\epsilon(\cV_{(\lambda_-\lambda_+)}))\neq a \cdot \bI$
for any $a\in\bZ$.
Using (B6) we therefore obtain \eqref{eq:DegreeBrest}, from this and Corollary \ref{cor:differentfinitedegrees} we get \eqref{eq:DegreeB}.
Applying Theorem \ref{thm:RabAlt} we complete the proof in the case of positive $\lambda_{\pm}$.

Reasoning in a similar way in the case $\lambda_{-}<\lambda_+<0$ we finish the proof.
\end{proof}

\begin{Remark}
In \cite{GolKluSte} we have investigated a problem similar to \eqref{eq:ball1}. Namely, we have considered the system \eqref{eq:ball1} with the potential of the form $F(u,\lambda)=\lambda f(u)$ with $f$ having a non-degenerate critical orbit. In particular we have proved that in this non-degenerate case, under the assumptions of Theorem \ref{thm:ball}, there occurs a global bifurcation exactly from the orbit $G(\tilde{u}_0) \times \{\frac{\beta_0}{\alpha_0}\}$. Theorem \ref{thm:ball} therefore generalises this result to the case of a degenerate critical orbit and a more general potential $F$.

The precise indication of the bifurcation level is a consequence of the necessary condition given in Lemma 3.1 of \cite{GolKluSte}. Since we do not have such a condition in the degenerate case, we have obtained only an approximate location of the bifurcation, i.e. we have proved that there is a global bifurcation at any arbitrarily small neighbourhood of $\frac{\beta_0}{\alpha_0}$ or there occurs a local bifurcation at every level from this neighbourhood.
\end{Remark}

\section{Appendix}\label{sec:appendix}

\subsection{Equivariant topology}

Let $\bV$ be a finite dimensional, orthogonal representation of a compact Lie group $G$. Fix $v_0\in \bV$, put $H=G_{v_0}$ and consider an $H$-representation $\bW=T_{v_0}^{\bot} G(v_0)$, i.e $\bW$ is the space normal to the orbit $G(v_0)$ at $v_0$.
Denote by $B_{\epsilon}(v_0,\bW)$ the open ball of radius $\epsilon$ centred at $v_0$ and by $G\times_H X$ the twisted product of an $H$-space $X$ over $H$, see \cite{Kawakubo}, \cite{TomDieck}.
In the theorems below we collect some properties of a neighbourhood of an orbit, for their proofs see for example \cite{Bredon}, \cite{DuisKolk}, \cite{Field}, \cite{Mayer}.

\begin{Theorem}[Slice theorem]\label{thm:slice}
There exists $\epsilon>0$ such that the mapping $G\times_H \bW \to \bV$ defined by $[g,w]\mapsto gw$ induces a $G$-equivariant diffeomorphism $\theta$ from $G\times_H B_{\epsilon}(v_0,\bW)$ to an open $G$-invariant neighbourhood  $G\cdot B_{\epsilon}(v_0,\bW)=\{g w\colon g\in G, w\in B_{\epsilon}(v_0,\bW) \}$ of the orbit $G(v_0)$.
\end{Theorem}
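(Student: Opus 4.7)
The plan is to construct $\theta$ as the descent of the evaluation map $(g,w)\mapsto gw$, verify via a differential computation that it is a local diffeomorphism near the zero section $G\times_H\{v_0\}$, and then promote local injectivity to injectivity on a full tube $G\times_H B_\epsilon(v_0,\bW)$ by a compactness argument. Well-definedness and equivariance are easy starting points: since the $G$-action on $\bV$ is orthogonal and the tangent space $T_{v_0}G(v_0)$ is $H$-invariant (with $H=G_{v_0}$), its orthogonal complement $\bW$ is an $H$-invariant linear subspace of $\bV$; the evaluation $(g,w)\mapsto gw$ then respects the twisted-product equivalence $(gh,h^{-1}w)\sim(g,w)$, so it descends to a smooth $G$-equivariant map $\theta\colon G\times_H\bW\to\bV$.

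Next I would identify $T_{[e,v_0]}(G\times_H\bW)\simeq(\mathfrak{g}/\mathfrak{h})\oplus\bW$ and compute $d\theta_{[e,v_0]}(X,u)=Xv_0+u$. The infinitesimal action $X\mapsto Xv_0$ is an isomorphism from $\mathfrak{g}/\mathfrak{h}$ onto $T_{v_0}G(v_0)$, so $d\theta_{[e,v_0]}$ is an isomorphism onto $T_{v_0}G(v_0)\oplus\bW=\bV$. By $G$-equivariance this persists along the entire zero section $G\times_H\{v_0\}$, and combining the inverse function theorem with compactness of $G(v_0)$ yields an $\epsilon_0>0$ for which $\theta$ restricted to $G\times_H B_{\epsilon_0}(v_0,\bW)$ is a local diffeomorphism onto an open $G$-invariant subset of $\bV$.

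The main obstacle will be upgrading local injectivity to global injectivity on the tube, which I would handle by contradiction. If injectivity failed for every $\epsilon>0$, one could extract sequences $[g_n,w_n]\neq[g_n',w_n']$ in $G\times_H B_{1/n}(v_0,\bW)$ with $g_nw_n=g_n'w_n'$; after left-translating, I may assume $g_n=e$. Compactness of $G$ gives a subsequential limit $g_n'\to g_\infty$, and passing to the limit in $g_n'w_n'=w_n$ forces $g_\infty v_0=v_0$, so $g_\infty\in H$; then using $[g_n',w_n']=[g_n'g_\infty^{-1},g_\infty w_n']$ allows me to further assume $g_n'\to e$. But then both classes $[e,w_n]$ and $[g_n',w_n']$ converge to $[e,v_0]$, so for $n$ large both lie in a neighbourhood on which $\theta$ is injective, and $\theta([e,w_n])=\theta([g_n',w_n'])$ forces $[e,w_n]=[g_n',w_n']$, contradicting the choice of sequences. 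The image $\theta(G\times_H B_\epsilon(v_0,\bW))=G\cdot B_\epsilon(v_0,\bW)$ is then automatically open, $G$-invariant, and contains $G(v_0)$, which together with the established bijectivity identifies it as the desired $G$-equivariant diffeomorphism.
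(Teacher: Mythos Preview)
Your argument is correct and follows the standard proof of the differentiable slice theorem. Note, however, that the paper does not actually prove this statement: Theorem~\ref{thm:slice} is merely recalled in the appendix and attributed to the references \cite{Bredon}, \cite{DuisKolk}, \cite{Field}, \cite{Mayer}. Your sketch is essentially the argument one finds in those sources (e.g.\ Duistermaat--Kolk), so there is nothing to compare against in the paper itself.

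One small point worth making explicit for completeness: in this setting $\bW=T_{v_0}^\perp G(v_0)$ is a \emph{linear} subspace of $\bV$ that actually contains $v_0$, since orthogonality of the $G$-action makes the infinitesimal generators skew-symmetric and hence $\langle Xv_0,v_0\rangle=0$ for all $X\in\mathfrak g$. This is what allows the ball $B_\epsilon(v_0,\bW)$ to be centred at $v_0$ inside $\bW$ and makes your identification $T_{v_0}\bW\cong\bW$ and the well-definedness check $(gh)(h^{-1}w)=gw$ go through cleanly.
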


\begin{Theorem}\label{thm:slice2}
If $\epsilon$ is given by Theorem \ref{thm:slice} and $g\cdot B_{\epsilon}(v_0,\bW)\cap B_{\epsilon}(v_0,\bW) \neq \emptyset$, then $g\in H$.
\end{Theorem}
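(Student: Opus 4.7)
The plan is to read the conclusion directly off the slice theorem. By Theorem \ref{thm:slice}, the natural map $\theta \colon G \times_H B_{\epsilon}(v_0, \bW) \to G \cdot B_{\epsilon}(v_0, \bW)$ given by $[g,w] \mapsto gw$ is a $G$-equivariant diffeomorphism; in particular it is injective. I intend to exploit injectivity together with the defining equivalence relation of the twisted product to force $g$ into $H$.

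Concretely, I would start from the hypothesis $g \cdot B_{\epsilon}(v_0, \bW) \cap B_{\epsilon}(v_0, \bW) \neq \emptyset$, which produces $w_1, w_2 \in B_{\epsilon}(v_0, \bW)$ with $g w_1 = w_2$. Translating this equality into the twisted product via $\theta$, one has
\[
\theta([g, w_1]) = g w_1 = w_2 = \theta([e, w_2]),
\]
and injectivity of $\theta$ yields the identification $[g, w_1] = [e, w_2]$ in $G \times_H B_{\epsilon}(v_0, \bW)$.

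The final step is to unpack this equality in the twisted product. Since $G \times_H B_{\epsilon}(v_0, \bW)$ is the quotient of $G \times B_{\epsilon}(v_0, \bW)$ by the relation $(g', h^{-1} w') \sim (g' h, w')$ with $h \in H$, the equality $[g, w_1] = [e, w_2]$ means that there is some $h \in H$ with $e = g h$ and $w_2 = h^{-1} w_1$. The first of these gives $g = h^{-1} \in H$, which is the desired conclusion.

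The entire argument is essentially a one-line consequence of Theorem \ref{thm:slice}, so there is no serious obstacle; the only point one has to be careful about is the bookkeeping for the twisted product relation (order of multiplication and which side acts by $h$ versus $h^{-1}$), so one avoids a spurious conjugation that would weaken the conclusion from $g \in H$ to $g \in$ some conjugate of $H$.
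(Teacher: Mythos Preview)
Your argument is correct. Note, however, that the paper does not actually supply its own proof of this theorem: it is stated in the appendix with the blanket remark that proofs of Theorems \ref{thm:slice}--\ref{thm:uniq} can be found in the cited references (Bredon, Duistermaat--Kolk, Field, Mayer). Your proof is precisely the standard one-line deduction from the slice theorem via injectivity of $\theta$ and the definition of the twisted product, which is what those references do as well, so there is nothing to contrast.
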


\begin{Theorem}\label{thm:uniq}
Fix $G$-invariant functions $\phi_1,\phi_2\colon G\cdot B_{\epsilon}(v_0,\bW)\to\bR$, where $\epsilon$ is given by Theorem \ref{thm:slice} and assume that $\phi_1(w)=\phi_2(w)$ for every $w\in B_{\epsilon}(v_0,\bW)$. Then $\phi_1=\phi_2$.
\end{Theorem}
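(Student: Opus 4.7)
The plan is straightforward: use the $G$-invariance of $\phi_1$ and $\phi_2$ together with the description of $G\cdot B_\epsilon(v_0,\bW)$ as the union of $G$-orbits of points of $B_\epsilon(v_0,\bW)$.

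First I would fix an arbitrary point $v\in G\cdot B_\epsilon(v_0,\bW)$. By the very definition of this set, there exist $g\in G$ and $w\in B_\epsilon(v_0,\bW)$ with $v=gw$. Applying the $G$-invariance of each function, I obtain $\phi_1(v)=\phi_1(gw)=\phi_1(w)$ and $\phi_2(v)=\phi_2(gw)=\phi_2(w)$. By the hypothesis $\phi_1(w)=\phi_2(w)$, whence $\phi_1(v)=\phi_2(v)$. Since $v$ was arbitrary, this yields $\phi_1=\phi_2$ on the whole tube $G\cdot B_\epsilon(v_0,\bW)$.

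There is essentially no obstacle in this direction. It is worth noting that the argument does not actually require the full force of the slice theorem (Theorem \ref{thm:slice}) or of Theorem \ref{thm:slice2}; those results become relevant only for the companion \emph{existence} problem of constructing a $G$-invariant extension of a prescribed $H$-invariant function $\psi$ on $B_\epsilon(v_0,\bW)$. In that case the candidate formula $\phi(gw):=\psi(w)$ would need to be checked for well-posedness when $gw=g'w'$ with $w,w'\in B_\epsilon(v_0,\bW)$, and Theorem \ref{thm:slice2} combined with $H$-invariance of $\psi$ is exactly what resolves this ambiguity. For the uniqueness statement at hand, however, both $\phi_1$ and $\phi_2$ are already given as honest $G$-invariant functions, and no consistency issue ever arises — the proof is a one-line application of invariance.
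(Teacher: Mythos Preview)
Your argument is correct. The paper does not supply its own proof of this theorem; it is stated in the appendix with the remark that the proofs of Theorems \ref{thm:slice}--\ref{thm:uniq} can be found in the standard references \cite{Bredon}, \cite{DuisKolk}, \cite{Field}, \cite{Mayer}. Your one-line argument from $G$-invariance is exactly the natural proof, and your observation that the slice theorem is only genuinely needed for the companion \emph{existence} statement (well-definedness of the $G$-extension) is spot on.
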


In our paper we use a topological invariant (namely the degree for equivariant gradient maps) which is an element of the Euler ring $(U(G), +, \star)$, see \cite{TomDieck1}, \cite{TomDieck} for the definition of this ring. When applying the degree we use the fact that $U(G)$ can be identified with the $\bZ$-module $\bigoplus_{(H)_G \in \overline{sub}[G]} \bZ$ (see Corollary IV.1.9 of \cite{TomDieck}), where $\overline{sub}[G]$ is the set of conjugacy classes of closed subgroups of $G$. Moreover, we use the representation of elements of $U(G)$ as finite sums of the form $\sum_{(H)_G \in \overline{sub}[G]} n_H \cdot \chi_G(G/H^+)$, where $n_H \in \bZ$, $\chi_G(G/H^+)$ is a $G$-equivariant Euler characteristic of a pointed $G$-CW-complex $G/H^+$ (see \cite{TomDieck}). The unit in $U(G)$ is $\bI=\chi_G(G/G^+)$.

\subsection{Equivariant degree}
Let $\bV$ be a finite dimensional, orthogonal representation of a compact Lie group $G$ and let $\varphi \in C^1(\bV, \bR)$ be a $G$-invariant function. Moreover, let $\Omega \subset \bV$ be an open, bounded $G$-invariant set such that $\partial \Omega \cap (\nabla \varphi)^{-1}(0) = \emptyset.$
In such a case we say that $\varphi$ is an $\Omega$-admissible function.
For such $\bV, \varphi$ and $\Omega$ G\c{e}ba has defined in \cite{Geba} the degree $\nabla_G\textrm{-}\mathrm{deg}(\nabla \varphi, \Omega)$, being an element of the Euler ring $U(G)$.

The definition given by Gęba uses the fact that any $G$-invariant $\Omega$-admissible function can be approximated by a $G$-invariant $\Omega$-Morse function and in the next step by a so called special $G$-invariant $\Omega$-Morse function. However, in our applications it is enough to use the relation of $G$-invariant $\Omega$-admissible functions with $G$-invariant $\Omega$-Morse functions. Recall that $\varphi$ is called an $\Omega$-Morse function if for every $v\in(\nabla \varphi)^{-1}(0) \cap \Omega$ the orbit $G(v)$ is non-degenerate, i.e. $\dim \ker \nabla^2 \varphi (v)= \dim G(v)$.

The following lemma is a consequence of Theorem 2.2 of \cite{Fang}.

\begin{Lemma} \label{lem:istnienie-Morsa}
If $\varphi \in C^2(\bV, \bR)$ is an $\Omega$-admissible $G$-invariant function, then there exists a $G$-invariant set $\Omega_0$ and a $G$-invariant function  $\widehat{\varphi} \in C^2(\bV, \bR)$ such that
\begin{enumerate}
\item $(\nabla \varphi)^{-1}(0) \cap \Omega\subset \Omega_0\subset cl(\Omega_0)\subset \Omega,$
\item $\widehat{\varphi}(v)=\varphi(v)$ for every  $v \in \bV \setminus \Omega_0,$
\item $\widehat{\varphi}$ is a $G$-invariant $\Omega$-Morse function.
\end{enumerate}
\end{Lemma}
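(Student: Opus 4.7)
The plan is to combine the equivariant Morse approximation result of Fang (Theorem 2.2 of \cite{Fang}) with a standard localisation via a $G$-invariant cutoff function. The starting observation is that $K := (\nabla\varphi)^{-1}(0)\cap \Omega$ is compact and $G$-invariant: it is closed in $\bV$, contained in the bounded set $\Omega$, and by $\Omega$-admissibility satisfies $\dist(K,\partial \Omega)>0$. Consequently I can select a $G$-invariant open set $\Omega_0$ with $K\subset \Omega_0 \subset cl(\Omega_0)\subset \Omega$, for example by taking a small open tubular neighbourhood of $K$ and averaging the corresponding bump function over $G$ with respect to the normalised Haar measure.

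Next I invoke Theorem 2.2 of \cite{Fang}, which states that the set of $G$-invariant $\Omega$-Morse functions is $C^2$-dense in the space of $G$-invariant $C^2$-functions. This produces a $G$-invariant $\widetilde{\varphi}\in C^2(\bV,\bR)$, arbitrarily close to $\varphi$ in the $C^2$-norm on $cl(\Omega)$, all of whose critical orbits in $\Omega$ are non-degenerate. To localise the perturbation, I pick a $G$-invariant smooth cutoff $\rho\colon \bV\to [0,1]$ (again obtained by averaging a radial bump over $G$) with $\rho\equiv 1$ on some $G$-invariant open neighbourhood $U$ of $K$ with $cl(U)\subset \Omega_0$, and $\rho\equiv 0$ outside $\Omega_0$. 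Then I define
$$\widehat{\varphi} := \varphi + \rho\cdot(\widetilde{\varphi}-\varphi).$$
By construction $\widehat{\varphi}$ is $G$-invariant and coincides with $\varphi$ on $\bV\setminus \Omega_0$, so conditions (1) and (2) are immediate. On $U$, where $\rho\equiv 1$, one has $\widehat{\varphi}=\widetilde{\varphi}$; hence every critical orbit of $\widehat{\varphi}$ lying in $U$ is automatically non-degenerate.

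The remaining point, and the genuine obstacle, is to show that $\widehat{\varphi}$ has no critical points in $cl(\Omega_0)\setminus U$, so that all of its critical orbits in $\Omega$ actually lie in $U$. On the compact set $cl(\Omega_0)\setminus U$ the continuous function $|\nabla\varphi|$ attains a positive minimum $c>0$, since this set is disjoint from $K$. A direct estimate gives
$$|\nabla\widehat{\varphi}-\nabla\varphi| \leq |\nabla\rho|\cdot|\widetilde{\varphi}-\varphi| + \rho\cdot|\nabla\widetilde{\varphi}-\nabla\varphi|,$$
and the right-hand side can be made uniformly smaller than $c$ on $cl(\Omega)$ provided that $\widetilde{\varphi}$ is chosen sufficiently $C^1$-close to $\varphi$; the cutoff $\rho$ (hence $|\nabla\rho|$) is fixed once $\Omega_0$ and $U$ have been chosen, so this smallness is achievable. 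Therefore $\nabla\widehat{\varphi}$ does not vanish on $cl(\Omega_0)\setminus U$, every critical orbit of $\widehat{\varphi}$ in $\Omega$ lies in $U$ and agrees with a critical orbit of $\widetilde{\varphi}$, and admissibility of $\widehat{\varphi}$ with respect to $\Omega$ follows as well. This yields (3) and completes the plan.
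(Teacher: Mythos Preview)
The paper does not actually supply a proof of this lemma: it simply records the statement and remarks that ``the following lemma is a consequence of Theorem 2.2 of \cite{Fang}.'' Your proposal is a correct and carefully written unpacking of precisely that consequence, using the density of $G$-invariant $\Omega$-Morse functions together with a standard $G$-invariant cutoff to localise the perturbation to a neighbourhood $\Omega_0$ of the critical set. So the approaches agree; you have merely made explicit what the paper leaves as a citation.
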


We say that a $G$-invariant $\Omega$-Morse function $\widehat{\varphi}$ satisfying the assertion of the above lemma is associated with $\varphi$. Note that from the lemma it follows that $\widehat{\varphi}$ and $\varphi$ are $\Omega$-homotopic, i.e. there exists a $G$-invariant $C^2$-function $ h\colon \bV \times [0,1] \to \bR$ such that  $(\nabla_v h)^{-1} (0) \cap (\partial \Omega \times [0,1])=\emptyset$ and $\nabla_v h(v, 0) = \nabla \varphi(v), \nabla_v h(v,1) = \nabla \widehat{\varphi}(v)$.

The degree $\nabla_G\textrm{-}\mathrm{deg}(\nabla \varphi, \Omega)$ has properties analogous to these of the Brouwer degree such as additivity, excision, linearisation, homotopy invariance, see \cite{Geba}, \cite{Ryb2005milano}.
Moreover, there holds the product formula for this degree, see \cite{GolRyb2013}. For the convenience of the reader we recall it below.

\begin{Lemma}\label{lem:prod}
Let $\Omega_i\subset\bV_i$ be open, bounded and $G$-invariant subsets of $G$-representations $\bV_i$ and let $\varphi_i\in C^1(\bV_i,\bR)$ be $G$-invariant and $\Omega_i$-admissible functions for $i=1,2$. Then $\varphi_1+\varphi_2\in C^1(\bV_1\oplus\bV_2,\bR)$ is $\Omega_1\times\Omega_2$-admissible and
 $$\nabla_G\text{-}\deg((\nabla\varphi_1,\nabla\varphi_2), \Omega\times\Omega_2)=\nabla_G\text{-}\deg(\nabla\varphi_1, \Omega_1)\star\nabla_G\text{-}\deg(\nabla\varphi_2, \Omega_2).$$
\end{Lemma}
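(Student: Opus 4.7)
The plan is to first dispense with admissibility, then reduce to Morse data, then read off the product structure of the degree from the product structure of the critical set. For admissibility, I would argue by contradiction: if $(v_1,v_2)\in\partial(\Omega_1\times\Omega_2)$ satisfies $\nabla(\varphi_1+\varphi_2)(v_1,v_2)=(\nabla\varphi_1(v_1),\nabla\varphi_2(v_2))=0$, then $v_i\in cl(\Omega_i)$ with at least one $v_i\in\partial\Omega_i$, contradicting the $\Omega_i$-admissibility of $\varphi_i$. Hence $\varphi_1+\varphi_2$ is $(\Omega_1\times\Omega_2)$-admissible.

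Next, I would invoke Lemma \ref{lem:istnienie-Morsa} to pick $G$-invariant $\Omega_i$-Morse functions $\widehat\varphi_i$ associated with $\varphi_i$, each agreeing with $\varphi_i$ off a compactly contained $G$-invariant neighbourhood $\Omega_i^0$ of the critical set in $\Omega_i$. The pointwise sum $\widehat\varphi_1+\widehat\varphi_2$ coincides with $\varphi_1+\varphi_2$ outside $\Omega_1^0\times\Omega_2^0$, and its Hessian at a critical point $(v_1,v_2)$ is block-diagonal with blocks $\nabla^2\widehat\varphi_i(v_i)$. Therefore $\ker\nabla^2(\widehat\varphi_1+\widehat\varphi_2)(v_1,v_2)=\ker\nabla^2\widehat\varphi_1(v_1)\oplus\ker\nabla^2\widehat\varphi_2(v_2)=T_{v_1}G(v_1)\oplus T_{v_2}G(v_2)$, which contains $T_{(v_1,v_2)}G(v_1,v_2)$. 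Since each summand is at most the $G$-orbit dimension, the sum is Morse provided we perturb slightly to knock out potential extra degeneracy — one standard way is to replace one of the $\widehat\varphi_i$ by a further generic $G$-equivariant Morse perturbation so that for every pair of critical orbits $G(v_1),G(v_2)$ of $\widehat\varphi_1,\widehat\varphi_2$ the isotropy intersection $G_{v_1}\cap G_{v_2}$ equals the isotropy of $(v_1,v_2)$ acting diagonally. By homotopy invariance of $\nabla_G\textrm{-}\mathrm{deg}$ applied componentwise, and its standard behaviour under replacing a function by an associated Morse function, this reduces the problem to Morse functions on each factor.

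Now the critical set of $\widehat\varphi_1+\widehat\varphi_2$ in $\Omega_1\times\Omega_2$ is a disjoint union of $G$-orbits of points $(v_1,v_2)$ with $v_i\in(\nabla\widehat\varphi_i)^{-1}(0)\cap\Omega_i$, and I would further reduce to special Morse functions in the sense of G\k{e}ba, where each $G$-orbit contributes a single generator $\chi_G(G/K^+)$ to $U(G)$ together with a sign from the negative eigenspace of the Hessian transverse to the orbit. The contribution of $G(v_1,v_2)$ to $\nabla_G\textrm{-}\mathrm{deg}(\nabla(\widehat\varphi_1+\widehat\varphi_2),\Omega_1\times\Omega_2)$ then equals the product (in the sense of the Euler ring) of the contributions of $G(v_1)$ and $G(v_2)$, because the transverse Hessian splits as a direct sum, the Morse indices add, and the homogeneous space $G/(G_{v_1}\cap G_{v_2})$ corresponds to the diagonal orbit. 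Summing over all such pairs and invoking bilinearity of $\star$ yields the claimed product formula.

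The main obstacle will be the precise matching of the Euler ring multiplication with the combinatorics of Morse contributions: one must verify that for each pair of critical orbits the local contribution to the degree of the sum is exactly $\chi_G(G/G_{v_1}^+)\star\chi_G(G/G_{v_2}^+)$ with the right sign, which uses the definition of $\star$ as the class of the Cartesian product of pointed $G$-CW-complexes together with the equivariant Thom isomorphism in normal directions. Once this local calculation is in hand, additivity of the degree across disjoint critical orbits and distributivity of $\star$ over $+$ in $U(G)$ finish the argument; alternatively, one can cite the explicit computation carried out in \cite{GolRyb2013}.
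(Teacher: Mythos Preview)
The paper does not actually supply a proof of this lemma: it is recorded in the appendix as a known product formula, with the sentence ``Moreover, there holds the product formula for this degree, see \cite{GolRyb2013}. For the convenience of the reader we recall it below.'' So there is no proof in the paper to compare against; the benchmark is simply the citation.

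Your outline has a genuine gap in the Morse reduction step. If $\widehat\varphi_1,\widehat\varphi_2$ are $G$-invariant $\Omega_i$-Morse functions, then the critical set of $\widehat\varphi_1+\widehat\varphi_2$ in $\Omega_1\times\Omega_2$ consists of the products $G(v_1)\times G(v_2)$, not of single diagonal orbits $G(v_1,v_2)$. For the diagonal $G$-action one always has $G_{(v_1,v_2)}=G_{v_1}\cap G_{v_2}$, so the condition you propose to achieve by ``a further generic $G$-equivariant Morse perturbation'' is automatic and buys nothing. The dimensional mismatch
\[
\dim\ker\nabla^2(\widehat\varphi_1+\widehat\varphi_2)(v_1,v_2)=\dim G(v_1)+\dim G(v_2)\quad\text{versus}\quad \dim G(v_1,v_2)=\dim G-\dim(G_{v_1}\cap G_{v_2})
\]
persists under any perturbation that preserves the product form, because the critical set will always be the full product manifold $G(v_1)\times G(v_2)$. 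In other words, $\widehat\varphi_1+\widehat\varphi_2$ is essentially never an $(\Omega_1\times\Omega_2)$-Morse function in G\c{e}ba's sense when $\dim G>0$, and no perturbation of the individual factors repairs this.

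What is really needed is to recognise $G(v_1)\times G(v_2)\cong (G/G_{v_1})\times(G/G_{v_2})$ as a nondegenerate critical \emph{manifold} (in the Bott sense) and to identify its contribution to the degree with the Euler-ring product $\chi_G((G/G_{v_1})^+)\star\chi_G((G/G_{v_2})^+)=\chi_G\bigl((G/G_{v_1}\times G/G_{v_2})^+\bigr)$, the sign coming from the block-diagonal normal Hessian. This is precisely the computation you defer to \cite{GolRyb2013} at the end; but the earlier paragraphs, as written, claim a reduction to the Morse case that does not go through. Either work directly with the Bott-type critical manifolds and the smash-product description of $\star$, or invoke \cite{GolRyb2013} from the outset as the paper does.
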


\begin{Remark}\label{rem:nontriv}
It is known (see \cite{Geba}) that if $G$ acts trivially on $\Omega$, then
\begin{equation}\label{eq:Brouwer}
\nabla_G\textrm{-}\mathrm{deg}(\nabla \varphi, \Omega)=
                   \deg_B(\nabla \varphi, \Omega,0) \cdot \bI.
\end{equation}

On the other hand, applying the results from \cite{GarRyb} (in particular Lemma 3.4) it is easy to prove that for any nontrivial $G$-representation $\bV$ there holds
$\nabla_{G}\textrm{-}\mathrm{deg}(-Id,B(\bV))\neq a \cdot \bI$ for any $a\in\bZ$.
\end{Remark}

Now we are going to provide a sketch of the definition of the $G$-equivarint degree in the infinite dimensional case. Denote by $\bH$ an infinite dimensional, separable Hilbert space being an orthogonal representation of the group $G$.
Let $\{\pi_n\colon\bH \rightarrow  \bH\colon n \in \bN\}$ be a sequence of $G$-equivariant  orthogonal projections.
We say that this sequence is a $G$-equivariant approximation scheme on $\bH$ if the following conditions are fulfilled
\begin{enumerate}
\item $\bH^n = \im \pi_n$ is a finite dimensional, orthogonal $G$-representation for any $n\in \bN,$
\item $\bH^n \varsubsetneq \bH^{n+1}$ for any $n \in \bN,$
\item $\ds \lim_{n \to \infty} \pi_n u=u$ for any $u \in \bH$.
\end{enumerate}

Let us consider a $G$-equivariant gradient operator $\nabla\Phi \in C^1(\bH, \bH)$ of the form of a completely continuous perturbation of the identity.
Let $\Omega \subset \bH$ be an open, bounded $G$-invariant set such that $\partial \Omega \cap (\nabla \Phi)^{-1}(0)=\emptyset.$ In this situation the degree $\nabla_G\textrm{-}\mathrm{deg}(\nabla \Phi, \Omega) \in U(G)$ is defined by the following formula, see \cite{Ryb2005milano},
$$\nabla_G\textrm{-}\mathrm{deg}(\nabla \Phi, \Omega)=  \nabla_G\textrm{-}\mathrm{deg}(\nabla\Phi|_{\bH^n}, \Omega \cap \bH^n),$$
where $n$ is sufficiently large.

Our aim is to consider the global bifurcation phenomenon, see Definition \ref{def:globbif}. In the proofs of the bifurcation results in Section \ref{sec:elliptic} we apply an equivariant version of the Rabinowitz alternative, we recall it below.

Consider a family of $G$-invariant functionals $\Phi\in C^2(\bH\times\bR,\bR)$ of the form $\nabla_u \Phi(u,\lambda)=u-\nabla_u \zeta (u,\lambda),$ where
 $\nabla_u\zeta\colon\bH\times\bR\to\bH$ is a completely continuous, $G$-equivariant operator. Suppose that there is $u_0\in\bH$ such that  $G(u_0)\subset(\nabla_u \Phi(\cdot,\lambda))^{-1}(0)$ for every $\lambda\in\bR$. We call elements of $G(u_0)\times\bR$ the trivial solutions of $\nabla_u \Phi(u,\lambda)=0$.

\begin{Theorem}\label{thm:RabAlt}
Suppose that there are $\lambda_{\pm}$ and a $G$-invariant open bounded set $\Omega\subset\bH$ such that $(\nabla_u\Phi(\cdot,\lambda_{\pm}))^{-1}(0)\cap cl(\Omega)=G(u_0)$.
If
$$\nabla_G\text{-}\deg(\nabla_u\Phi(\cdot,\lambda_-),\Omega)\neq \nabla_G\text{-}\deg(\nabla_u\Phi(\cdot,\lambda_+),\Omega),$$
then a global bifurcation of solutions of $\nabla_u \Phi(u,\lambda)=0$ occurs from the orbit $G(u_0)\times \{\widehat\lambda\}$ for some $\widehat\lambda \in (\lambda_-,\lambda_+)$.
\end{Theorem}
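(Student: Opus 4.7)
The plan is to argue by dichotomy: assume the local bifurcation alternative (i) fails, and then deduce the global bifurcation alternative (ii) by computing equivariant gradient degrees and invoking Theorem \ref{thm:RabAlt}. Concretely, if (i) fails, then on each side of $\beta_0/\alpha_0$ there exists some $\lambda$ at which $(\tilde u_0,\lambda)$ is \emph{not} an accumulation point of nontrivial solutions; choose such $\lambda_-\in(\beta_0/\alpha_0-\varepsilon,\beta_0/\alpha_0)$ and $\lambda_+\in(\beta_0/\alpha_0,\beta_0/\alpha_0+\varepsilon)$. Since the failure of a local bifurcation means $G(\tilde u_0)$ is isolated in $(\nabla_u\Phi(\cdot,\lambda_\pm))^{-1}(0)$, we can pick a common $G$-invariant bounded open $\Omega\subset\bH$ with $(\nabla_u\Phi(\cdot,\lambda_\pm))^{-1}(0)\cap cl(\Omega)=G(\tilde u_0)$. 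The goal then becomes showing
\[
\nabla_G\text{-}\mathrm{deg}(\nabla_u\Phi(\cdot,\lambda_-),\Omega)\neq \nabla_G\text{-}\mathrm{deg}(\nabla_u\Phi(\cdot,\lambda_+),\Omega),
\]
after which Theorem \ref{thm:RabAlt} yields a global bifurcation from $G(\tilde u_0)\times\{\widehat\lambda\}$ for some $\widehat\lambda\in(\lambda_-,\lambda_+)\subset(\beta_0/\alpha_0-\varepsilon,\beta_0/\alpha_0+\varepsilon)$.

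To compare the two $G$-degrees I would follow the same reduction scheme used in the proof of Theorem \ref{thm:sphere}. First pass to the finite-dimensional approximation $\bH^n$ for $n$ large (so that the spaces $\cW(\lambda_\pm)$ below stabilise). Next, reduce to the normal slice: put $H=\{e\}\times SO(N)=G_{\tilde u_0}$, $\bW=T_{\tilde u_0}^{\perp}G(\tilde u_0)$, and $\Psi=\Phi|_{\bW\times\bR}$. By admissibility of the pair $(G,H)$ (Lemma 2.8 of \cite{GolKluSte}) and Corollary \ref{cor:differentfinitedegrees}, the $G$-degree inequality follows from the corresponding $H$-degree inequality on $B_\delta(\tilde u_0,\bW^n)$. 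Shift the critical point to the origin, setting $\Pi_\pm(u)=\Psi|_{\bW^n}(u+\tilde u_0,\lambda_\pm)$. Now apply the splitting lemma (Lemma \ref{thm:splitting}) together with the product formula (Lemma \ref{lem:prod}): the degenerate part of $\nabla\Pi_\pm$ contributes the Brouwer-degree factor
\[
\deg_B(\nabla F|_{T_{u_0}^{\perp}\Gamma(u_0)}(\cdot,\lambda_\pm),B_\epsilon(T_{u_0}^{\perp}\Gamma(u_0)),0)\cdot\bI,
\]
nonzero by (B6), and the complementary, invertible linear part contributes the $\nabla_H\text{-}\mathrm{deg}$ of $-\mathrm{Id}$ on the negative eigenspaces $\cW(\lambda_\pm)$ of $(Id-L_{\lambda_\pm A})|_{\cR_n}$.

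The decisive step is tracking what changes in passing $\lambda$ from $\lambda_-$ to $\lambda_+$. Since $\lambda_-,\lambda_+$ have the same sign (by the $0\notin(\beta_0/\alpha_0-\varepsilon,\beta_0/\alpha_0+\varepsilon)$ hypothesis), say $0<\lambda_-<\lambda_+$, Remark \ref{rem:linearequation} tells us that $\cW(\lambda_+)=\cW(\lambda_-)\oplus\cV_{(\lambda_-,\lambda_+)}$, where
\[
\cV_{(\lambda_-,\lambda_+)}=\bigoplus_{\lambda\in\Lambda\cap(\lambda_-,\lambda_+)}\ \bigoplus_{\alpha_j\in\sigma(A)}\ \bigoplus_{\substack{\beta_k\in\sigma(-\Delta;B^N)\setminus\{0\}\\ \beta_k=\lambda\alpha_j}}\bV_{-\Delta}(\beta_k)^{\mu_A(\alpha_j)}
\]
contains $\bV_{-\Delta}(\beta_0)^{\mu_A(\alpha_0)}$ as a summand. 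Then $\nabla_H\text{-}\mathrm{deg}((\cL_+)|_{\cR_n},B_\delta(\cR_n))$ differs from $\nabla_H\text{-}\mathrm{deg}((\cL_-)|_{\cR_n},B_\delta(\cR_n))$ by the invertible factor $\nabla_H\text{-}\mathrm{deg}(-\mathrm{Id},B_\delta(\cV_{(\lambda_-,\lambda_+)}))$ (invertibility by Theorem 2.1 of \cite{GolRyb2011}). Collecting everything, the required inequality reduces to
\[
\deg_B(\cdot,\lambda_-)\cdot\bI\neq\deg_B(\cdot,\lambda_+)\cdot\nabla_H\text{-}\mathrm{deg}(-\mathrm{Id},B_\delta(\cV_{(\lambda_-,\lambda_+)})).
\]

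The main obstacle, and the place where the hypothesis $\dim\bV_{-\Delta}(\beta_0)>1$ enters, is verifying that this inequality actually holds: by Lemma \ref{lem:irred}, the assumption forces $\bV_{-\Delta}(\beta_0)$ to be a nontrivial $SO(N)$-representation, so $\cV_{(\lambda_-,\lambda_+)}$ contains a nontrivial $H$-subrepresentation; Remark \ref{rem:nontriv} then yields $\nabla_H\text{-}\mathrm{deg}(-\mathrm{Id},B_\delta(\cV_{(\lambda_-,\lambda_+)}))\neq a\cdot\bI$ for any $a\in\bZ$, while (B6) ensures both Brouwer factors are nonzero integers. The case $\lambda_+<\lambda_-<0$ is handled by the symmetric computation, exchanging the roles of $\lambda_\pm$ as in the closing paragraph of the proof of Theorem \ref{thm:sphere}. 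Applying Theorem \ref{thm:RabAlt} then delivers the global bifurcation from some $\widehat\lambda\in(\lambda_-,\lambda_+)$, establishing alternative (ii).
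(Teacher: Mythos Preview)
Your proposal does not address the stated theorem. The statement you were asked to prove is Theorem~\ref{thm:RabAlt}, the abstract equivariant Rabinowitz alternative: \emph{given} that the two degrees differ, conclude that a global bifurcation occurs at some $\widehat\lambda\in(\lambda_-,\lambda_+)$. What you have written is instead (a correct outline of) the proof of Theorem~\ref{thm:ball}: you assume alternative (i) fails, produce $\lambda_\pm$, and then \emph{verify} that the degrees differ by the splitting/product/nontriviality computation. Crucially, at the end you write ``Applying Theorem~\ref{thm:RabAlt} then delivers the global bifurcation\ldots'', so your argument invokes the very statement it is supposed to establish.

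In the paper, Theorem~\ref{thm:RabAlt} is not proved at all; the authors simply note that ``the proof of this theorem is standard in the degree theory'' and cite \cite{Brown}, \cite{Dancer1973}, \cite{Rabinowitz}, \cite{Rabinowitz1}. A genuine proof has nothing to do with the spectral data $\alpha_0,\beta_0$, the splitting lemma, or Lemma~\ref{lem:irred}. It is a topological separation argument of Rabinowitz type: assuming no global bifurcation in the sense of Definition~\ref{def:globbif} occurs from any $G(u_0)\times\{\widehat\lambda\}$ with $\widehat\lambda\in(\lambda_-,\lambda_+)$, one constructs a bounded $G$-invariant open set $\cU\subset\bH\times[\lambda_-,\lambda_+]$ isolating the trivial branch from the nontrivial solution set, and then uses the generalised homotopy (otopy) invariance of $\nabla_G\text{-}\deg$ along the slices $\cU_\lambda$ to conclude that the degrees at $\lambda_-$ and $\lambda_+$ coincide, contradicting the hypothesis. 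None of that machinery appears in your proposal.
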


The proof of this theorem is standard in the degree theory, see for instance \cite{Brown}, \cite{Dancer1973}, \cite{Rabinowitz}, \cite{Rabinowitz1}.

\subsection{Equivariant splitting lemma}
We end this section with recalling the so called splitting lemma that we use in computations of the degree in the degenerate case.

Consider a compact Lie group $H$, a finite dimensional orthogonal $H$-representation $\bW$ and an $H$-invariant function $\psi\in C^2(\bW,\bR)$. Suppose that 0 is its isolated critical point.
Assume additionally that $\nabla^2 \psi(0)$ is not an isomorphism.
Denote by $\cN$ and $\cR$ the kernel and the image of $\nabla^2 \psi(0)$, both being orthogonal $H$-representations.

\begin{Theorem}\label{thm:splitting}
There exist $\varepsilon>0$ and an $H$-equivariant homotopy $\nabla\cH\colon (B_\epsilon(\cN)\times B_\epsilon(\cR))\times[0,1]\to\bW$ satisfying
\begin{enumerate}
\item $(\nabla_u\cH)^{-1}(0)\cap ((B_\epsilon(\cN)\times B_\epsilon(\cR))\times[0,1])=\{0\}\times [0,1]$,  i.e. $0$ is an isolated critical point of $\cH(\cdot,t)$ for every $t\in[0,1]$,
\item $\nabla_u\cH((v,w),0)=\nabla\psi(v,w)$ for all $u =(v,w)\in B_\epsilon(\cN)\times B_\epsilon(\cR)$,
\item there exists an $H$-equivariant map $\nabla\varphi\colon B_\epsilon(\cN)\to \cN$ such that $\nabla_u\cH((v,w),1)=(\nabla\varphi(v),(\nabla^2\psi(0)|_{\cR}) w)$ for all $(v,w)\in B_\epsilon(\cN)\times B_\epsilon(\cR)$.
\end{enumerate}
\end{Theorem}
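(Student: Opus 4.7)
The plan is to adapt the standard Morse--Bott splitting argument to the equivariant setting, following the strategy of \cite{FRR}. First I would apply the equivariant implicit function theorem to the equation $\nabla_w \psi(v,w)=0$. Since $\nabla^2\psi(0)|_{\cR}\colon \cR\to\cR$ is an isomorphism, one obtains $\epsilon_1>0$ and an $H$-equivariant $C^1$-map $\sigma\colon B_{\epsilon_1}(\cN)\to\cR$ with $\sigma(0)=0$ such that on a neighbourhood of the origin the equation $\nabla_w\psi(v,w)=0$ is equivalent to $w=\sigma(v)$. Define the $H$-invariant reduction $\varphi(v):=\psi(v,\sigma(v))$; by the defining property of $\sigma$ one gets $\nabla\varphi(v)=\nabla_v\psi(v,\sigma(v))$. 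Moreover, because $0$ is isolated in $(\nabla\psi)^{-1}(0)$, the IFT correspondence (critical points of $\psi$ near $0$ are in bijection with pairs $(v,\sigma(v))$ satisfying $\nabla\varphi(v)=0$) forces $0$ to be isolated in $(\nabla\varphi)^{-1}(0)$ as well.

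Next I would Taylor-expand $\psi$ along the normal fibres to the graph of $\sigma$. Writing
\[
\psi(v,w)=\varphi(v)+\tfrac{1}{2}\langle Q(v,w)(w-\sigma(v)),(w-\sigma(v))\rangle,
\]
with
\[
Q(v,w)=2\int_0^1(1-s)\,\nabla_w^2\psi\bigl(v,\sigma(v)+s(w-\sigma(v))\bigr)\,ds,
\]
one obtains a smooth $H$-equivariant family $Q(v,w)$ of symmetric operators on $\cR$ with $Q(0,0)=\nabla^2\psi(0)|_{\cR}$, which is invertible, and hence invertible on a smaller neighbourhood. The natural candidate for the required homotopy is then
\[
\cH((v,w),t):=\varphi(v)+\tfrac{1}{2}\langle Q_t(v,w)\bigl(w-(1-t)\sigma(v)\bigr),\bigl(w-(1-t)\sigma(v)\bigr)\rangle,
\]
where $Q_t(v,w):=(1-t)Q(v,w)+t\,\nabla^2\psi(0)|_{\cR}$. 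A direct computation verifies that $\nabla_u\cH(\cdot,0)=\nabla\psi$ and $\nabla_u\cH((v,w),1)=\bigl(\nabla\varphi(v),\nabla^2\psi(0)|_{\cR}\,w\bigr)$, giving items (2) and (3). The $H$-equivariance of $\nabla\cH$ is immediate from the equivariance of $\sigma$, of $Q$, and from the $H$-invariance of $\psi$.

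The main obstacle is item (1): establishing the existence of a common radius $\epsilon>0$ for which $0$ is the unique critical point of $\cH(\cdot,t)$ on $B_\epsilon(\cN)\times B_\epsilon(\cR)$ for every $t\in[0,1]$. For this I would compute
\[
\nabla_w\cH((v,w),t)=Q_t(v,w)\bigl(w-(1-t)\sigma(v)\bigr)+R(v,w,t),
\]
where $R$ is the term coming from differentiating $Q_t$ and is quadratic in $\|w-(1-t)\sigma(v)\|$. Using uniform invertibility of $Q_t$ together with this higher-order bound, one forces $w=(1-t)\sigma(v)$ on a sufficiently small ball, uniformly in $t$. Plugging this back into the $v$-component yields $\nabla_v\cH((v,(1-t)\sigma(v)),t)=\nabla\varphi(v)$, and the previously established isolation of $0\in(\nabla\varphi)^{-1}(0)$ then gives $v=0$. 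Choosing $\epsilon$ smaller than the minimum of the radii produced by the IFT, the invertibility domain of $Q_t$, the quadratic remainder estimate, and the isolation radius of $\varphi$ simultaneously, one obtains the desired homotopy, completing the proof of Theorem \ref{thm:splitting}.
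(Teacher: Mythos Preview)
Your proposal follows the same backbone as the proof the paper cites from \cite{FRR}: apply the equivariant implicit function theorem to $\nabla_w\psi=0$, obtain the reduced functional $\varphi(v)=\psi(v,\sigma(v))$, and then homotope $\nabla\psi$ to the product map $(\nabla\varphi,\nabla^2\psi(0)|_{\cR})$. The difference lies only in the explicit homotopy. The paper (Remark \ref{rem:homotopia}) records Dancer's homotopy
\[
\cH((v,w),t)=\tfrac12\langle L w,w\rangle+\tfrac12\, t(2-t)\langle L\sigma(v),\sigma(v)\rangle+t\,\eta(v,\sigma(v))+(1-t)\,\eta\bigl(v,w+t\sigma(v)\bigr),
\]
with $L=\nabla^2\psi(0)|_{\cR}$ and $\eta=\psi-\tfrac12\langle\nabla^2\psi(0)\cdot,\cdot\rangle$, whereas you use a Gromoll--Meyer style construction, interpolating the Taylor remainder operator $Q$ together with the shift of base point. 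Both reach the same endpoints, and your verification of items (2)--(3) and the critical-point analysis for item (1) (solve $\nabla_w\cH_t=0$ first, then feed into $\nabla_v\cH_t$ to get $\nabla\varphi(v)=0$) is correct.

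One technical caveat is worth flagging. In your treatment of item (1) you write $\nabla_w\cH_t=Q_t(v,w)\,z_t+R$ with $R$ ``coming from differentiating $Q_t$''. This requires $Q$ to be $C^1$, hence $\psi\in C^3$, while the statement is formulated for $\psi\in C^2$; under $C^2$ alone it is not clear from your formula that $\cH_t$ is even $C^1$ for $0<t<1$, since $Q$ involves $\nabla_w^2\psi$ and is then only continuous. Dancer's homotopy avoids this because it is assembled directly from $\eta\in C^2$ and $\sigma\in C^1$, so $\nabla_u\cH_t$ is manifestly continuous without ever differentiating a Hessian. You can close this gap either by adding one derivative of regularity or by substituting Dancer's formula; the remainder of your argument then goes through unchanged.
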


The proof of the above theorem can be found in \cite{FRR} (Lemma 3.2) and uses a homotopy proposed by Dancer in \cite{Dancer1984}.
Since in the proof of Theorem \ref{thm:sphere} the form of this homotopy is needed, we recall it below.

\begin{Remark}\label{rem:homotopia} The homotopy considered in Theorem \ref{thm:splitting} is of the form
\begin{multline*}\cH((v,w),t)=\frac{1}{2}\langle (\nabla^2\psi(0)|_{\cR}) w,w\rangle +\frac{1}{2} t (2-t)\langle (\nabla^2\psi(0)|_{\cR}) \widetilde{w}(v), \widetilde{w}(v)\rangle +\\+ t \eta(v,\widetilde{w}(v))+(1-t)\eta(v, w+t\widetilde{w}(v)\rangle,\end{multline*}
where $\psi(u) =\frac{1}{2} \langle \nabla^2\psi(0)(u),u\rangle + \eta(u)$ and $\widetilde{w}$ is a function obtained from the equivariant version of the implicit function theorem (see Theorem 3.1 of \cite{FRR}).
\end{Remark}


\end{document}